     \def\section{\@startsection{section}{1}%
     \z@{.7\linespacing\@plus\linespacing}{.5\linespacing}%
     {\bfseries
     \centering
     }}
     \def\@secnumfont{\bfseries}
\newtheorem{theorem}{Theorem}[section]
\newtheorem{lemma}[theorem]{Lemma}
\newtheorem{proposition}[theorem]{Proposition}
\newtheorem{corollary}[theorem]{Corollary}
\theoremstyle{definition}
\newtheorem{definition}[theorem]{Definition}
\theoremstyle{remark}
\newtheorem{remark}[theorem]{Remark}
\numberwithin{equation}{section}
\begin{document}

\title{Totalitarian Random Tug-of-War games in graphs}

\author[Marcos Ant\'on]{Marcos Ant\'on}
\address{Universitat Polit\`ecnica de Catalunya, Departament
de Matem\`{a}tiques, Diagonal 647, 08028 Barcelona, Spain}
\email{m.anton.a@outlook.com}

\author[Fernando Charro]{Fernando Charro}
\address{Fernando Charro: 
Department of Mathematics, Wayne State University, Detroit, MI 48202, USA}
\email{fcharro@math.wayne.edu}

\author[Peiyong Wang]{Peiyong Wang*}
\thanks{* Peiyong Wang is partially supported by a Simon's Collaboration Grant.}
\address{Peiyong Wang: 
Department of Mathematics, Wayne State University, Detroit, MI 48202, USA}
\email{pywang@math.wayne.edu}

\subjclass[2000] {Primary 91A05; Secondary: 65N22, 35B05, 35J70}


\keywords{PDE in graphs, Infinity Laplacian, Tug-of-War, Comparison principle}

\begin{abstract}
In this work we discuss a random Tug-of-War game in graphs where one of the players has the power to decide at each turn whether to play a round of classical random Tug-of-War, or let the other player choose the new game position in exchange of a fixed payoff. We prove that this game has a value using a discrete comparison principle and viscosity tools, as well as probabilistic arguments. This game is related to Jensen's extremal equations, which have a key role in Jensen's celebrated proof of uniqueness of infinity harmonic functions. 
\end{abstract}

\maketitle



\section{Introduction}\label{section.intro}

Random Tug-of-War games were introduced in \cite{Peres2009} in connection with partial differential equations (see also the survey \cite{Rossi2010}). Informally, random Tug-of-War games play for the normalized infinity Laplacian
\begin{equation}\label{infinityLaplacian_normalized}
\Delta_\infty^N u(x):=
\begin{cases}
\left<D^2u(x)\, \frac{\nabla u(x)}{|\nabla u(x)|},\frac{\nabla u(x)}{|\nabla u(x)|}\right>, & \textnormal{if}\ \nabla u(x)\neq0; \\[0.5em]
\lim_{y\to x}\frac{2(u(y)-u(x))}{|y-x|^2}, & \textnormal{otherwise}
\end{cases}
\end{equation}the role that the Brownian motion plays for the Laplacian. Observe that \eqref{infinityLaplacian_normalized} is the pure second derivative of $u$ in the direction of the gradient whenever $\nabla u(x)\neq0$.
On the other hand, at points where $\nabla u(x)=0$ no direction is preferred and it is only required that the limit exists.

\subsection{Classical random Tug-of-War games}\label{introd.classical.ToW}
The classical random Tug-of-War game (see \cite{Peres2009}) is a two-person, zero-sum game, that is, two players are in contest and the total earnings of one player are the losses of the other. 

Following \cite{Peres2009}, random Tug-of-War games can be described in a very general way in terms of a set $X$ of states of the game, a non-empty set $Y$ of terminal states, and an undirected graph $E$ with vertex set $X\cup Y$ that describes the possible move options for both players at any game state.

The game starts with a token placed at $x^0\in X\backslash Y$ and is played by turns. At each turn a fair coin is tossed and the winner of the toss is allowed to decide the next game position among all positions adjacent  to the current one (in the graph $E$).
Whenever the game position reaches $Y$ the game stops and Player I earns a terminal payoff given by a function $F:Y\to\mathbb{R}$, known to both players beforehand (notice that Player II's earnings are given by $-F$). 
Although we will not consider it here, it is also possible to include a running payoff, i.e., payments that Player I receives from Player II at each intermediate state of the game (see \cite{Peres2009}).

The sets $X$ and $Y$ can be general metric spaces as in \cite{Peres2009}, however there are two particular cases of special interest: the case when $X$ is a graph (in this case $Y\subset X$ and $E=X$), and when $X=\Omega\subset\mathbb{R}^n$.

In \cite{Peres2009} it was proved under very general assumptions that the classical random Tug-of-War game has a value, that is, a function $u(x)$ which represents the expected outcome of the game just described, starting at a point $x\in X$, when both players play optimally. Moreover, the game value satisfies a functional equation known as Dynamic Programming Principle (DPP) 
\[
u(x)=\frac{1}{2}\left(\sup_{y\in\{x\}'}u(y)+\inf_{y\in\{x\}'}u(y)\right)\quad\textnormal{for all}\ \ x\in X,
\]
where $\{x\}'$ denotes the set of neighbors in $E$ of $x\in X$.

In the particular case of $X=\Omega\subset\mathbb{R}^n$, the players are allowed to move to any point in $\overline\Omega$ within a distance $\epsilon$ from $x$. The step size $\epsilon$ is known to both players beforehand.
In this case the DPP reads
\begin{equation}\label{DPP.classic.ToW.intro}
u_\epsilon(x)=\frac{1}{2}\left(\sup_{y\in\overline{B}_\epsilon(x)\cap\overline{\Omega}}u_\epsilon(y)+\inf_{y\in\overline{B}_\epsilon(x)\cap\overline{\Omega}}u_\epsilon(y)\right)\quad\textnormal{for all}\ \ x\in\Omega.
\end{equation}

The key observation in \cite{Peres2009} is that the DPP can be seen as a ``discretization" of the normalized infinity Laplacian. In other words, the limit $u=\lim_{\epsilon\to0}u_\epsilon$ (known as the continuous value of the game in the terminology of \cite{Peres2009}) is a viscosity solution of the Dirichlet problem for the normalized infinity Laplacian, that is,
\begin{equation*}
\begin{cases}
-\Delta_\infty^N u(x)=0, & x\in\Omega;\\
u(x)=F(x), & x\in\partial\Omega.
\end{cases}
\end{equation*}
This is reminiscent of how the Dirichlet problem for the Laplace equation has a probabilistic interpretation in terms of the Brownian motion. The main difference is that all directions are equally probable for the Brownian motion, while the random Tug-of-War considers only the directions of maximal and minimal growth.

\medskip

\subsection{An overview of the infinity Laplacian}
The infinity Laplace operator 
\[
\Delta_\infty u(x)=
\left<D^2u(x)\, \nabla u(x),\nabla u(x)\right>
\]
and its normalized version \eqref{infinityLaplacian_normalized}
appear naturally in
optimal transportation and image processing (see, e.g., \cite{Gangbo1999,Azorero2007}), as well as absolutely minimizing Lipschitz extensions of a given Lipschitz function (see \cite[Section 3]{Rossi2010}.
The interested reader can also check the survey \cite{Lindqvist2014} for a more comprehensive review of the applications of the infinity Laplacian.

 A function $u$ is infinity harmonic if and only if $-\Delta_\infty u=0$ in the viscosity sense, which is equivalent to
  $-\Delta_\infty^Nu=0$ in the viscosity sense. Observe, however, that the infinity Laplacian and the normalized infinity Laplacian are not interchangeable for a general right-hand side $f\not\equiv0$.

Let us just mention that an equivalent characterization of infinity harmonic functions can be based on the following asymptotic mean value formula
\begin{equation}\label{(16)[Lindqvist]}
u(x)=\frac{1}{2}\left(\sup_{y\in\overline{B}_\epsilon(x)}u(y)+\inf_{y\in\overline{B}(x)}u(y)\right)
+o(\epsilon^2)\quad\textnormal{as}\ \ \epsilon\to0
 \end{equation}
(see \cite[Theorem 5.3]{Lindqvist2014}).
In fact, a function $u$ is infinity harmonic in $\Omega$ if and only if $u\in C(\Omega)$ and the mean value formula (\ref{(16)[Lindqvist]}) holds in $\Omega$ in the viscosity sense.

\begin{remark}
It is worth comparing \eqref{DPP.classic.ToW.intro} and \eqref{(16)[Lindqvist]}. Functions satisfying \eqref{DPP.classic.ToW.intro} are called harmonious functions in \cite{LeGruyer-Archer} and are values of classical random Tug-of-War games. As mentioned before, they approximate solutions to the $\infty$-Laplace equation as $\epsilon\to0$ (see \cite{Peres2009}), which satisfy (\ref{(16)[Lindqvist]}) in the viscosity sense (see \cite[Theorem 5.3]{Lindqvist2014}).
\end{remark}

The natural framework to study the infinity Laplacian is the framework of viscosity solutions; it turns out that  one can prescribe smooth boundary values that no $C^2$ solution of $-\Delta_\infty u=0$ can attain. This was proved by Aronsson \cite{Aronsson1967} in the two-dimensional case and by Yu \cite{Yu2006} in higher dimensions. Moreover,  the fact that the operator is not in divergence form, does not allow us to  integrate by parts and define a notion of weak solution.

Moreover, a classical solution of $-\Delta_\infty u=0$ is a viscosity solution but the converse is not true in general. An important example is the function 
\begin{equation*}
u(x,y)=x^{4/3}-y^{4/3},
\end{equation*}
which is infinity harmonic in the viscosity sense but not in the classical one. In fact, this particular function has regularity $C^{1,1/3}$ (see \cite{Crandall2008} for more details).

The regularity of infinity harmonic functions turns out to be a very tough question (see comments on \cite[Section 3]{Evans2007}, \cite[Section 6]{Urbano1997}, \cite[Section 1.5 and 1.8]{Wang2008}). According to \cite{Evans2011}, infinity harmonic functions are differentiable everywhere, while
$C^1$ or $C^{1,\alpha}$ regularity are known to hold in dimension two after the breakthroughs of \cite{Evans2008} and \cite{Savin2005}. It remains an open problem to prove $C^1$ or $C^{1,\alpha}$ regularity in general dimensions.

As the nomenclature ``infinity Laplacian'' suggests, the infinity Laplace equation, $-\Delta_\infty u=0$, is a limit as $p\to\infty$ of the \emph{$p$-Laplace equation}, $-\Delta_p u=0$ (see for instance \cite{Bhattacharya1991}). 
However, the case of the so-called ``infinity Poisson equation" $-\Delta_{\infty}u(x)=f(x)$ is more complex since, in general, it is not the limit as $p\to\infty$ of the corresponding \emph{$p$-Poisson equation} $-\Delta_p u=f$. For instance, in the case $f\equiv1$ the correct limit equation turns out to be
\begin{equation}\label{eq.Jensen.min.intro}
\min\left\{|\nabla u(x)|-1,-\Delta_\infty u(x)\right\}=0
\end{equation}
(see \cite{Bhattacharya1991,Kawohl1990, Lindqvist2014}). Equation \eqref{eq.Jensen.min.intro} is a particular case of Jensen's extremal equations
\begin{equation}\label{eq.jensen.min.intro}
 \min\left\{|\nabla u(x)|-\lambda,-\Delta_\infty u(x)\right\}=0
 \end{equation}
 and
\begin{equation}\label{eq.Jensen.max.intro}
 \max\left\{\lambda-|\nabla u(x)|,-\Delta_\infty u(x)\right\}=0,
 \end{equation}
for $\lambda>0$,
known to have a key role in Jensen's celebrated proof of uniqueness of infinity harmonic functions (see \cite{jensen1993}). 
Notice that for every $\lambda>0$ we have 
 \[
 \min\left\{|\nabla u|-\lambda,-\Delta_\infty^Nu\right\}=0
\quad
\iff
\quad
 \min\left\{|\nabla u|-\lambda,-\Delta_\infty u\right\}=0
 \]
in the viscosity sense, and similarly for \eqref{eq.Jensen.max.intro}.

\subsection{The Totalitarian random Tug-of-War}\label{introduce.Tot.ToW}
We introduce a new game, which we call \emph{Totalitarian random Tug-of-War}, or simply \emph{Totalitarian Tug-of-War}
that is related to Jensen's extremal equations \eqref{eq.jensen.min.intro} and \eqref{eq.Jensen.max.intro}. Even if we will focus our attention on the game played on a graph, the relation between Jensen's extremal equations and the Totalitarian Tug-of-War played in $\Omega\subset\mathbb{R}^n$ will be clarified in Section \ref{limit.eqs.jen}. For simplicity let us focus on equation \eqref{eq.Jensen.min.intro}, since
the general case can be then obtained through simple modifications  that we will describe below.

 The Totalitarian Tug-of-War is a variant of a classical random Tug-of-War game in which one of the players is given extra options. More precisely, the game is played by turns starting with a token placed at a node $x^0$. At each turn, Player I has the power to decide whether to play a round of classical random Tug-of-War (that is, they toss a coin and the winner decides the new game position among the neighbors of the current one), or force Player II choose the new game position among the neighbors of the current one in exchange of a fixed payoff of value $\epsilon$. The fact that Player I somehow imposes at each turn the type of game that is played, is the reason why we refer to this game as Totalitarian Tug-of-War.

The game ends the first time the token reaches a terminal state $x^\tau$, and the payoff Player I receives from Player II is $F_\tau+k_\tau\, \epsilon$ (the game is a two-person, zero-sum game). $F_\tau$ corresponds to the terminal payoff at $x^\tau$ and $k_\tau\in\{0,1,\dots,\tau\}$ is a positive integer that represents the number of times Player I has let Player II decide the next move in exchange of an $\epsilon$-payoff throughout the game. 

Just as in the classical random Tug-of-War game, Player I wants to maximize the final payoff that receives from Player II, who in turn wants to minimize it (and maximize his/her own). In order to attain this objective, the players follow strategies according to which they take a particular action at each turn. Note that all terminal payoffs and the value $\epsilon$ are known to both players beforehand and that players are assumed to play optimally.

When the Totalitarian Tug-of-War is played in $\Omega\subset\mathbb{R}^n$ the payment that Player I receives from
Player II when the latter is forced to choose the new game position is proportional to the 
step size of the classical Tug-of-War game. Denoting the proportionality constant by $\lambda$, we recover \eqref{eq.jensen.min.intro} in the limit as $\epsilon\to0$ (see Section \ref{limit.eqs.jen}). Moreover, considering a Totalitarian Tug-of-War which favors Player II instead of Player I, we can treat \eqref{eq.Jensen.max.intro}. See Section \ref{limit.eqs.jen} for more details.

Let us devote the rest of the introduction to describing our main results in this work along with some key notions that will be used throughout this work.

The first one is the notion of (pure) strategy of a player. A {\it pure strategy} for player $\alpha$, denoted $S_\alpha=\{S_\alpha^k\}_k$, is a sequence of mappings from histories $H_k$ to actions $a^k$. The {\it history} up to stage $k$ is the sequence of game positions and actions up to the $k$-th turn of the game, written as $H^k=(x^0,a^0,x^1,a^1,\dots,a^{k-1},x^k)$, where $x^i$ stands for the game position at the $i$-th turn and $a^i$ means the action carried out by the player who moved in this turn from position $x^i$. 

Roughly speaking, at every turn the strategy indicates the player's next move, provided such player is given the choice, as a function of the current game position and past history. In other words, the strategy of a player says what action to take at each running node of the game, but it depends on the evolution of the game that these actions are accomplished or not. More information about the general notion of strategy can be found in \cite[Section 3.1]{Rossi2010}.

For some games, such as the classical random Tug-of-War and the Totalitarian Tug-of-War,
 the action of a player at a given node is independent of both the stage of the game where the decision is made and the history up to that stage. 

For a game that starts at position $x$, where Players I and II adopt strategies $S_I$ and $S_{II}$ respectively, the expected payoff that Player I receives from Player II is denoted by $\mathbb{E}_{S_I,S_{II}}^{x}[F_\tau+k_\tau\epsilon]$, where $F_\tau$ is the payoff associated to the terminal position $x^\tau$. On the contrary, there may be games for which the players can choose strategies so that the game does not end almost surely. In order to penalize these strategies, the payoff that each player receives from the other is defined to be the worst possible, that is, $-\infty$ in the case of Player I and $+\infty$ for Player II. More precisely, the expected payoff that Player I receives from Player II 
is defined as
\begin{equation}\label{value_of_TT_game_PlayerI}
V_{S_I,S_{II}}^{x}(I):=
\begin{cases}
\mathbb{E}_{S_I,S_{II}}^{x}[F_\tau+k_\tau\, \epsilon],\quad\textnormal{if the game ends almost surely};\\
-\infty,\quad\textnormal{otherwise},
\end{cases}
\end{equation}
where $\mathbb{E}_{S_I,S_{II}}^{x_i}[F_\tau+k_\tau\, \epsilon]$ represents the expected payoff that Player I receives from Player II when they follow strategies $S_I$ and $S_{II}$ respectively and the game starts at the node $x_i$. Analogously, the expected payoff that Player II has to pay Player I is defined as
\begin{equation}\label{value_of_TT_game_PlayerII}
V_{S_I,S_{II}}^{x}(II):=
\begin{cases}
\mathbb{E}_{S_I,S_{II}}^{x}[F_\tau+k_\tau\, \epsilon],\quad\textnormal{if the game ends almost surely};\\
+\infty,\quad\textnormal{otherwise}.
\end{cases}
\end{equation}

Note that $V_{S_I,S_{II}}^{x}(I),V_{S_I,S_{II}}^{x}(II)$ defined in (\ref{value_of_TT_game_PlayerI}) and (\ref{value_of_TT_game_PlayerII}) respectively, can be used to assign a value to the Totalitarian Tug-of-War game in the same way as in the classical random Tug-of-War, the only difference being that in the latter case $k_\tau$ is absent from the definitions of $V_{S_I,S_{II}}^{x}(I),V_{S_I,S_{II}}^{x}(II)$.

More precisely, the \emph{value of the game for Player I} when the game starts at $x$, is defined as
\begin{equation}\label{value_P1_TT}
u^I(x):=\sup_{S_I}\inf_{S_{II}}V_{S_I,S_{II}}^{x}(I),
\end{equation}
and the \emph{value of the game for Player II} when the game starts at $x$, as
\begin{equation}\label{value_P2_TT}
u^{II}(x):=\inf_{S_{II}}\sup_{S_I}V_{S_I,S_{II}}^{x}(II).
\end{equation}
Observe that both $u^I$ and $u^{II}$ represent the worst possible expected payoffs for players I and II respectively, or in other words, $u^I$ and $u^{II}$ are, respectively,
the smallest and largest payoffs that Player I expects to receive from Player II. By definition, $u_I\leq u_{II}$.

Similarly to the classical random Tug-of-War game, we say that the Totalitarian Tug-of-War game has a value when $u^I=u^{II}$.

In the sequel we will refer to the Totalitarian Tug-of-War game played on graphs as the discrete Totalitarian Tug-of-War. 

In Section \ref{Totalitarian ToW has a value} we prove that the discrete Totalitarian Tug-of-War has a value. A key point in the proof of this result is that both $u^I$ and $u^{II}$, as well as the value of the game solve a Dynamic Programming Principle, obtained considering the two possible choices for Player I and then applying conditional probabilities for the coin toss.
In the case of a graph, the aforementioned DPP is, for all interior nodes $x_i$,
\begin{equation}\label{DPP_1}
u_i=\max\left\{\min_{j\in\{i'\}}u_j+\epsilon,\, \frac{1}{2}\left(\max_{j\in\{i'\}}u_j+\min_{j\in\{i'\}}u_j\right)\right\},
\end{equation}
where $u_i=u(x_i)$ and $\{i'\}$ denotes the finite set of indices associated to the nodal neighbors $x_{i'}$ of $x_i$ in the graph.

We prove that the game has a value by means of a discrete comparison principle for the DPP, proved in Section \ref{Totalitarian ToW has a value}.
The precise result is the following.
\begin{theorem}\label{Discrete_Comparison_Principle}
Let $\mathcal{N}$ (nodes) be the finite set of game position indices  and $\mathcal{I}$ the set of indices for the interior (running) nodes of the game. 
 Let $u$ and $v$ be respectively a subsolution and supersolution of
 \[
\min\left\{ u_i-\min_{j\in\{i'\}}u_j-\epsilon,\,  u_i-\frac{1}{2}\left(\max_{j\in\{i'\}}u_j+\min_{j\in\{i'\}}u_j\right)\right\}=0,
\]
i.e., $\mathcal{G}^i[u]\leq0\leq\mathcal{G}^i[v]$ for all $i\in\mathcal{I}$, where 
\[
\mathcal{G}^i[u]:=\min\left\{ u_i-\min_{j\in\{i'\}}u_j-\epsilon,\,  u_i-\frac{1}{2}\left(\max_{j\in\{i'\}}u_j+\min_{j\in\{i'\}}u_j\right)\right\}.
\]
Assume also that $v_i$ is bounded from above for all $i\in\mathcal{N}$ and from below for all $i\in\mathcal{N}\backslash\mathcal{I}$, and $u_i\leq v_i$ for all $i\in\mathcal{N}\backslash\mathcal{I}$. Then $u_i\leq v_i$ for all $i\in\mathcal{N}$.
\end{theorem}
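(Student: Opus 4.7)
The plan is to argue by contradiction. Suppose the conclusion fails, so that $m := \max_{i \in \mathcal{N}}(u_i - v_i) > 0$, and let $M := \{i \in \mathcal{N} : u_i - v_i = m\}$ be the set on which this maximum is attained. Since $u_i \le v_i$ for every $i \in \mathcal{N}\setminus\mathcal{I}$, we have $M \subset \mathcal{I}$. Because $\mathcal{N}$ is finite, I may further choose $i^\star \in M$ that minimizes $v$ over $M$, i.e., $v_{i^\star} = \min_{i\in M} v_i$. The aim is to exhibit a neighbor of $i^\star$ which still belongs to $M$ but at which $v$ is \emph{strictly} smaller than $v_{i^\star}$, contradicting this minimality. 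Throughout the argument $\min_j$ and $\max_j$ are taken over the neighbors $j\in\{(i^\star)'\}$ of $i^\star$, and $j_v$ denotes a neighbor at which $\min_j v_j$ is attained.

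The subsolution condition $\mathcal{G}^{i^\star}[u]\le 0$ means that at least one of its two defining terms is non-positive, while $\mathcal{G}^{i^\star}[v]\ge 0$ forces both corresponding terms for $v$ to be non-negative. In the first case, $u_{i^\star} \le \min_j u_j + \epsilon$; subtracting $v_{i^\star} \ge \min_j v_j + \epsilon$ and noting that $\min_j u_j - \min_j v_j \le u_{j_v} - v_{j_v} \le m$ yields
\[
m \;=\; u_{i^\star}-v_{i^\star} \;\le\; \min_j u_j - \min_j v_j \;\le\; u_{j_v}-v_{j_v} \;\le\; m,
\]
so every inequality is an equality, whence $j_v \in M$, and at the same time $v_{i^\star} \ge v_{j_v}+\epsilon > v_{j_v}$, contradicting the choice of $i^\star$.

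In the remaining case $u_{i^\star} \le \tfrac12(\max_j u_j + \min_j u_j)$, subtracting the analogous supersolution mean-value inequality and using the bounds $\max_j u_j - \max_j v_j \le m$ and $\min_j u_j - \min_j v_j \le m$ (each obtained by evaluating $u-v$ at the appropriate extremizer) yields
\[
m \;\le\; \tfrac12\bigl((\max_j u_j-\max_j v_j) + (\min_j u_j-\min_j v_j)\bigr) \;\le\; m.
\]
Hence both differences equal $m$ and in particular $j_v\in M$. The choice of $i^\star$ then gives $v_{j_v}\ge v_{i^\star}$, while $v_{i^\star}\ge\tfrac12(\max_j v_j+\min_j v_j)\ge\min_j v_j=v_{j_v}$ gives the reverse inequality, forcing $v_{i^\star}=\min_j v_j$. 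Feeding this equality into the \emph{other} supersolution inequality $v_{i^\star}\ge\min_j v_j+\epsilon$ yields $\epsilon\le 0$, a contradiction. The main subtlety, and the essential point beyond a standard discrete maximum principle argument, is the double extremization: maximizing $u-v$ alone only propagates equality into $M$, while the additional minimization of $v$ on $M$ is exactly what converts the $+\epsilon$ slack in the totalitarian branch of $\mathcal{G}$ into a strict contradiction in both cases.
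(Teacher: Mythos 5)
Your proof is correct, but it follows a genuinely different route from the paper's. The paper first proves a perturbation lemma (Lemma~\ref{Discrete_Comparison_Principle_auxLemma}): from the supersolution $v$ it constructs, via the quadratic change of variables $\tilde v_i=g(v_i)$ with $g(\alpha)=(1+\varepsilon)\alpha-\tfrac{\varepsilon}{4C}\alpha^2$, a \emph{strict} supersolution $\tilde v$ with $\mathcal{G}^i[\tilde v]\geq\mu>0$ that stays uniformly close to $v$; the comparison principle then follows from a maximum-point argument for $u-\tilde v$ together with the monotonicity of $\mathcal{G}^i$ written in terms of the differences $u_i-u_{i\pm1}$, which forces $\mathcal{G}^j[\tilde v]\leq\mathcal{G}^j[u]\leq 0$ and contradicts strictness (this mirrors Juutinen's comparison proof for Jensen's equation in the continuum). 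You instead avoid any perturbation and exploit directly the explicit $+\epsilon$ slack in the totalitarian branch, which the supersolution always provides via $v_i\geq\min_j v_j+\epsilon$; the key device is the double extremization (first maximize $u-v$, then among the maximizers pick the node where $v$ is smallest), which correctly propagates equality to a neighbor attaining $\min_j v_j$ and turns the $\epsilon$-gap into a strict contradiction in both branches. Your equality-chain bookkeeping ($\max_j u_j-\max_j v_j\leq m$ and $\min_j u_j-\min_j v_j\leq u_{j_v}-v_{j_v}\leq m$, evaluated at the appropriate extremizers) is sound, and the argument works verbatim on a general finite graph, whereas the paper writes out only the segment case and asserts the general case; you also use the boundedness hypotheses only through finiteness of $\mathcal{N}$ (where they are automatic), and you sidestep the paper's slightly loose step of assuming the maximum point of $u-\tilde v$ coincides with that of $u-v$. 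What the paper's approach buys in exchange is robustness: the strict-supersolution trick does not rely on the maximum being attained or on the explicit $\epsilon$-gap structure, so it transfers to the continuum comparison arguments the authors have in mind. Note only that, like the paper, you implicitly use $\epsilon>0$.
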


The proof of this discrete comparison principle is based on a change of variables that allows to produce strict supersolutions of the DPP from mere supersolutions and is inspired by the proof of the comparison principle for equation \eqref{eq.Jensen.min.intro} in \cite{Juutinen1998}.

As a consequence, we have the following result.

\begin{theorem}\label{Thm_of_unique_value_1Dgame}
The discrete Totalitarian Tug-of-War game has a value, which is unique.
\end{theorem}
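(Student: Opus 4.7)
\medskip
\noindent\textit{Proof proposal.}
The plan is to show that the lower value $u^I$ and the upper value $u^{II}$ serve respectively as a supersolution and a subsolution of the Dynamic Programming Principle \eqref{DPP_1} with matching boundary data, and then invoke Theorem~\ref{Discrete_Comparison_Principle} to squeeze them together. Since $u^I\le u^{II}$ holds trivially from \eqref{value_P1_TT}--\eqref{value_P2_TT}, the comparison principle applied to $(u,v)=(u^{II},u^I)$ will furnish the reverse inequality, and hence equality throughout $\mathcal{N}$; this yields existence of the value, while uniqueness is then automatic since the common value is explicitly $u^I=u^{II}$.

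The DPP one-sided inequalities are obtained by a first-turn decomposition of the game. For $u^I$, at an interior node $x_i$ Player~I is free to commit first to whichever of the two available actions (force Player~II in exchange for the payoff $\epsilon$, or play one classical Tug-of-War round) is more favorable, and then continue with an $\eta$-near-optimal strategy. A concatenation-of-strategies argument then yields
\[
u^I(x_i)\ge \max\!\left\{\min_{j\in\{i'\}}u^I(x_j)+\epsilon,\ \tfrac{1}{2}\left(\max_{j\in\{i'\}}u^I(x_j)+\min_{j\in\{i'\}}u^I(x_j)\right)\right\}-\eta,
\]
and sending $\eta\to 0$ gives $\mathcal{G}^i[u^I]\ge 0$. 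Symmetrically, for $u^{II}$ one glues $\eta$-near-optimal Player~II strategies at each neighbor of $x_i$: against any first-turn choice by Player~I the resulting expected payoff is at most $\max\{\min_j u^{II}(x_j)+\epsilon,\ \tfrac12(\max_j u^{II}(x_j)+\min_j u^{II}(x_j))\}+\eta$, and $\eta\to 0$ yields $\mathcal{G}^i[u^{II}]\le 0$.

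To apply Theorem~\ref{Discrete_Comparison_Principle} with $u=u^{II}$ and $v=u^I$ one further needs $u^I$ bounded above on $\mathcal{N}$, bounded below on $\mathcal{N}\setminus\mathcal{I}$, and $u^{II}\le u^I$ on $\mathcal{N}\setminus\mathcal{I}$. The last two reduce to the finiteness of $F$ on the finite terminal set, since both values coincide with $F$ there. For the global upper bound I would exhibit one explicit Player~II strategy $\bar S_{II}$ that terminates the game almost surely with uniformly bounded expected hitting time---for instance, always move along a fixed shortest path to the nearest terminal. Under $\bar S_{II}$ every ``force'' move strictly shortens the remaining path to a terminal, bounding $k_\tau$ by the graph diameter, while a gambler's-ruin estimate on the classical Tug-of-War rounds bounds the expected total termination time. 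Consequently $u^I(x)\le u^{II}(x)\le \sup|F|+C\epsilon$ for every $x\in\mathcal{N}$, and Theorem~\ref{Discrete_Comparison_Principle} then yields $u^{II}\le u^I$ on all of $\mathcal{N}$, completing the argument. I expect the main delicate point to be handling the $-\infty/+\infty$ convention of \eqref{value_of_TT_game_PlayerI}--\eqref{value_of_TT_game_PlayerII}: splicing a first-turn action onto a near-optimal continuation must be carried out with strategies known in advance to terminate (guaranteed by $\bar S_{II}$), otherwise the infinite penalty contaminates the sup/inf manipulations in the dynamic-programming step.

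\medskip
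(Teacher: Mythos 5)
Your overall architecture is the same as the paper's: establish that $u^{I}$ is a supersolution and $u^{II}$ a subsolution of the DPP (the paper quotes Proposition~\ref{discrete_auxProp}; your concatenation-of-near-optimal-strategies sketch, including the caveat about the $\pm\infty$ convention, is the standard way to justify it), use $u^{I}\leq u^{II}$ from the definitions, and close with Theorem~\ref{Discrete_Comparison_Principle} applied to $u=u^{II}$, $v=u^{I}$. The only genuinely different ingredient is how you verify the boundedness hypothesis of the comparison principle: the paper gets it from Corollary~\ref{u_i_bdd}, i.e.\ by comparing with the explicit barrier $v_i=\epsilon\min\{(n+1)-i,\,i\}\pm K$, while you try to bound $u^{II}$ from above by playing one explicit terminating strategy $\bar S_{II}$ for Player~II. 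This is a legitimate alternative (and boundedness is genuinely needed here, since a priori Player~I might extract unboundedly many $\epsilon$-payments), but your justification of it contains an error.

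The gap is the claim that under $\bar S_{II}$ the number of forced moves $k_\tau$ is bounded by the graph diameter because ``every force move strictly shortens the remaining path.'' Between two forced moves Player~I may play coin-toss rounds, win some of them, and push the token back away from the terminal, and then force again; starting at distance $2$, the pattern force--(win the toss and step away)--force--$\cdots$ occurs with positive probability for arbitrarily many repetitions, so $k_\tau$ is an unbounded random variable. What your argument actually needs, and what is true, is a bound on $\mathbb{E}[k_\tau]$. One way to get it: with $d(x)$ the graph distance to the terminal set, the process $\epsilon\, d(x_t)+\epsilon k_t$ is a supermartingale under $\bar S_{II}$ (a forced move changes it by $-\epsilon+\epsilon=0$; in a coin-toss round $d$ increases by at most $1$ or decreases by exactly $1$, each with probability $1/2$), and optional stopping together with the a.s.\ termination from your gambler's-ruin estimate gives $\mathbb{E}[k_\tau]\leq d(x_0)$, hence $u^{I}\le u^{II}\leq\max|F|+\epsilon\,\max_x d(x)$. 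Note that this supermartingale is precisely the probabilistic counterpart of the paper's barrier in Corollary~\ref{u_i_bdd}, so you could equally well quote that comparison argument. With this repair the rest of your proof goes through and coincides with the published one.
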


Section \ref{Some_game_examples_on_graphs} contains  explicit examples of Totalitarian Tug-of-War game on star-shaped graphs, with an exhaustive analysis of the graph-segment case. We rely on the fact that the value of the game exists and is the unique solution of the DPP, as proved in Section \ref{Totalitarian ToW has a value}. In this way, we can identify candidates to optimal strategies by direct inspection and it suffices to check that they satisfy the DPP.

Finally, in Section \ref{limit.eqs.jen} we clarify the relation between Jensen's extremal equations and the Totalitarian Tug-of-War played in $\Omega\subset\mathbb{R}^n$.

\medskip

\section{A discrete comparison principle: The Totalitarian Tug-of-War in Graphs has a value}\label{Totalitarian ToW has a value}

We refer to the Totalitarian Tug-of-War game played on graphs as the discrete Totalitarian Tug-of-War. For notational simplicity, we will consider here the case where the graph is a segment; however, the general case follows with the same ideas. We will denote by $\mathcal{N}$ (nodes) the  finite set of game position indices and $\mathcal{I}$ (interior nodes) the set of indices for the running nodes of the game. 

The DPP associated to the discrete Totalitarian Tug-of-War in a discrete graph segment, corresponds to
\begin{equation*}
u_i=\max\left\{\min\left\{u_{i-1},\, u_{i+1}\right\}+\epsilon,\, \frac{1}{2}\left(u_{i-1}+u_{i+1}\right)\right\}\quad\textnormal{for all}\ \ i\in\mathcal{I},
\end{equation*}
where $u_i$ stands for the expected value of the game on the node $x_i$ (compare with (\ref{DPP_1})). Note that this DPP can be equivalently written as 
\begin{equation}\label{1dim_DiscreteDPP}
\min\left\{u_i-\min\left\{u_{i-1},u_{i+1}\right\}-\epsilon,\, u_i-\frac{1}{2}\left(u_{i-1}+u_{i+1}\right)\right\}=0
\end{equation}
for all $i\in\mathcal{I}$. For convenience, we will rewrite (\ref{1dim_DiscreteDPP}) as $\mathcal{G}^i[u]=0$, where $u=(u_0,\dots,u_{n+1})$ and 
\[
\mathcal{G}^i[u]:=\min\left\{u_i-\min\left\{u_{i-1},u_{i+1}\right\}-\epsilon,\, u_i-\frac{1}{2}\left(u_{i-1}+u_{i+1}\right)\right\}
\]
for all $i\in\mathcal{I}$. The combination of the DPP with the terminal payoff, gives us the following Dirichlet problem associated to the discrete Totalitarian Tug-of-War
\begin{equation}\label{discrete_Dirichlet_Problem}
\begin{cases}
\mathcal{G}^i[u]=0,& i\in\mathcal{I};\\
u_i=F_i,& i\in\mathcal{N}\backslash\mathcal{I}.
\end{cases}
\end{equation}

Considering the two possible choices for Player I and then applying conditional probabilities for the coin toss, we obtain the following result. 
\begin{proposition}\label{discrete_auxProp}
The value functions of the game for Players I and II, $u_i^I$ and $u_i^{II}$ respectively, are both solutions to the discrete Dirichlet problem (\ref{discrete_Dirichlet_Problem}). 
\end{proposition}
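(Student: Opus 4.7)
My plan is to prove Proposition \ref{discrete_auxProp} by a Markov-type argument, conditioning on Player I's first-turn action at an interior node and exploiting the fact that after this action the remainder of the game has the same law as a fresh copy started at the new node. I will carry out the argument for $u^I$; the proof for $u^{II}$ follows by a symmetric argument after interchanging $\sup/\inf$ and the two players' roles. The boundary case is trivial: if $i\in\mathcal{N}\setminus\mathcal{I}$ the game terminates at step $0$ with $\tau=k_\tau=0$, and $V^{x_i}_{S_I,S_{II}}(I)=F_i$ for all strategies, so $u^I_i=F_i$. For the interior DPP at $x_i\in\mathcal{I}$, I will write the right-hand side as $T_i:=\max\{\min\{u^I_{i-1},u^I_{i+1}\}+\epsilon,\ \tfrac12(u^I_{i-1}+u^I_{i+1})\}$ and use that Player I's two first-turn options are (a) play a classical round (fair toss; winner chooses the next node), or (b) pay $\epsilon$ to Player II, who then chooses the next node.

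To prove $u^I_i\geq T_i$, the idea is to construct, for each $\eta>0$, Player I strategies achieving each of the two terms in $T_i$ up to $\eta$. For each $j\in\{i-1,i+1\}$ I fix an $\eta$-optimal Player I strategy $\tilde S_I^{(j)}$ from $x_j$ with $\inf_{S_{II}}\mathbb{E}^{x_j}_{\tilde S_I^{(j)},S_{II}}[F_\tau+k_\tau\epsilon]\geq u^I_j-\eta$, then concatenate option (a), with the rule \emph{move to $\arg\max\{u^I_{i-1},u^I_{i+1}\}$ when winning the toss}, with the appropriate $\tilde S_I^{(j)}$ after the first transition; the adversary's best reply (Player II picking $\arg\min\{u^I_{i-1},u^I_{i+1}\}$ upon winning the toss) gives payoff at least $\tfrac12(u^I_{i-1}+u^I_{i+1})-\eta$. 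Similarly, concatenating option (b) with the $\tilde S_I^{(j)}$'s gives payoff at least $\min\{u^I_{i-1},u^I_{i+1}\}+\epsilon-\eta$. Letting $\eta\downarrow 0$ and taking the maximum yields $u^I_i\geq T_i$.

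For the reverse inequality $u^I_i\leq T_i$, I would take an arbitrary $S_I$ and build a Player II reply attaining approximately the right-hand side. After splitting on $S_I$'s prescribed first-turn action and picking $\eta$-optimal Player II continuations $S_{II}^{(j)}$ against the continuations $S_I^{(j)}$ from each neighbor, Player II plays: in case (a), pick $\arg\min\{u^I_{i-1},u^I_{i+1}\}$ whenever she wins the toss and continue with $S_{II}^{(j)}$, yielding payoff at most $\tfrac12(u^I_{i-1}+u^I_{i+1})+\eta$; in case (b), pick $\arg\min\{u^I_{i-1},u^I_{i+1}\}$ and continue with $S_{II}^{(j)}$, for a bound $\min\{u^I_{i-1},u^I_{i+1}\}+\epsilon+\eta$. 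Either way $\inf_{S_{II}}V\leq T_i+\eta$; sending $\eta\downarrow 0$ and taking sup over $S_I$ yields the claim. The main technical point I expect to require care is the $\mp\infty$ termination convention in \eqref{value_of_TT_game_PlayerI}--\eqref{value_of_TT_game_PlayerII}: I need the strategies I construct to terminate whenever their continuations do, which is automatic here since only one extra step separates them from the $\tilde S_I^{(j)}$'s or $S_{II}^{(j)}$'s, so the $\mp\infty$ penalty never activates in either estimate.
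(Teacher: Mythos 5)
Your argument is correct and is essentially the paper's own approach: the paper justifies Proposition \ref{discrete_auxProp} precisely by conditioning on Player I's two possible first-turn choices and on the coin toss, which is what your first-step analysis with concatenated $\eta$-optimal continuation strategies makes precise (the paper itself leaves these standard details, including the handling of the $\mp\infty$ termination convention, unwritten). Your treatment of both inequalities and of the termination penalty is sound, so no changes are needed.
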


The importance of Proposition \ref{discrete_auxProp} lies on the fact that it allows us to apply PDE methods to study $u_i^I$ and $u_i^{II}$. In particular, we prove a discrete comparison principle, Theorem \ref{Discrete_Comparison_Principle}, that allows us to prove that (\ref{discrete_Dirichlet_Problem}) has a unique solution and therefore conclude that the game has a value.

The following lemma, is necessary for the proof of Theorem \ref{Discrete_Comparison_Principle}.

\begin{lemma}\label{Discrete_Comparison_Principle_auxLemma}
Let $v$ be a supersolution to $\mathcal{G}^i[v]=0$, bounded from above for all $i\in\mathcal{N}$ and from below for all $i\in\mathcal{N}\backslash\mathcal{I}$. Then, for every $\gamma>0$ there exists a supersolution $\tilde{v}$ to the equation $\mathcal{G}^i[\tilde{v}]=\mu$ for some constant $\mu=\mu(\gamma,v)>0$. Moreover, $\tilde{v}_i-v_i\leq\gamma$ for all $i\in\mathcal{N}$ and $\tilde{v}_i-v_i\geq-\gamma$ for all $i\in\mathcal{N}\backslash\mathcal{I}$.
\end{lemma}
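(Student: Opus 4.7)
The plan is to construct $\tilde v$ as a compound perturbation of $v$: a small multiplicative rescaling together with a small additive strictly concave correction that vanishes on the boundary. Working in the segment case $\mathcal{N}=\{0,1,\dots,n+1\}$, $\mathcal{I}=\{1,\dots,n\}$, I would take $\phi_i:=i(n+1-i)$, which is nonnegative on $\mathcal{N}$, vanishes on $\mathcal{N}\setminus\mathcal{I}$, and has the uniform concavity gap $\phi_i-\tfrac12(\phi_{i-1}+\phi_{i+1})\equiv 1$ on $\mathcal{I}$; then set
\[
\tilde v_i \;:=\; \beta\, v_i \,+\, \tau\, \phi_i,
\]
with $\beta>1$ and $\tau>0$ small constants to be fixed at the end in terms of $\gamma$, $\epsilon$, and the bounds on $v$.

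The ``mean'' half of $\mathcal{G}^i$ is handled directly by linearity:
\[
\tilde v_i - \tfrac12\bigl(\tilde v_{i-1}+\tilde v_{i+1}\bigr)
= \beta\bigl(v_i-\tfrac12(v_{i-1}+v_{i+1})\bigr) + \tau \;\ge\; \tau>0,
\]
using the supersolution inequality for $v$ and the concavity of $\phi$. For the ``min'' half I would rewrite $\tilde v_i-\min\{\tilde v_{i-1},\tilde v_{i+1}\}=\max\{\tilde v_i-\tilde v_{i-1},\tilde v_i-\tilde v_{i+1}\}$ and apply the elementary inequality $\max(a_1+b_1,a_2+b_2)\ge\max(a_1,a_2)+\min(b_1,b_2)$ to separate the $v$- and $\phi$-pieces, obtaining
\[
\tilde v_i - \min\{\tilde v_{i-1},\tilde v_{i+1}\}
\;\ge\; \beta\,\max\{v_i-v_{i-1},v_i-v_{i+1}\} + \tau\,\min\{\phi_i-\phi_{i-1},\phi_i-\phi_{i+1}\}
\;\ge\; \beta\,\epsilon - \tau\,\|\phi\|_\infty,
\]
since $\max\{v_i-v_{i\pm1}\}=v_i-\min\{v_{i-1},v_{i+1}\}\ge\epsilon$ is exactly the first half of $\mathcal{G}^i[v]\ge0$, and $|\phi_i-\phi_j|\le\|\phi\|_\infty$ is trivial. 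Subtracting $\epsilon$ leaves $(\beta-1)\epsilon-\tau\|\phi\|_\infty$, which is bounded below by $(\beta-1)\epsilon/2>0$ once one imposes $\tau\|\phi\|_\infty\le(\beta-1)\epsilon/2$.

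Finally, the closeness reduces to $\tilde v_i-v_i=(\beta-1)v_i+\tau\phi_i$: the upper bound $\tilde v_i-v_i\le\gamma$ on $\mathcal{N}$ uses that $v$ is bounded above and $\phi$ is bounded, while the lower bound $\tilde v_i-v_i\ge-\gamma$ on $\mathcal{N}\setminus\mathcal{I}$ collapses to $(\beta-1)v_i$ (since $\phi\equiv0$ there) and uses the two-sided boundedness of $v$ on the boundary. One thus first selects $\beta-1$ small (controlled by $\gamma$ and the bounds on $v$) and then $\tau$ small (also controlled by $\epsilon$ and $\|\phi\|_\infty$); setting $\mu:=\min\{(\beta-1)\epsilon/2,\,\tau\}>0$ completes the construction. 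The main obstacle, and the reason a purely additive perturbation $\tilde v=v+\tau\phi$ cannot work, is precisely the $\min$ in the first argument of $\mathcal{G}^i$: the identity of the minimizing neighbor can shift under perturbation, so an additive $\phi$-contribution can always be cancelled by the ``wrong'' neighbor; the multiplicative factor $\beta>1$ is what supplies a uniform $(\beta-1)\epsilon$ of slack drawn from $v$'s own supersolution inequality, large enough to absorb the oscillation of the $\tau\phi$ term.
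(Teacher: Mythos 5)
Your construction is correct, but it takes a genuinely different route from the paper's. The paper perturbs $v$ through a nonlinear change of dependent variable, $\tilde v_i=g(v_i)$ with $g(\alpha)=(1+\varepsilon)\alpha-\frac{\varepsilon}{4C}\alpha^2$ (inspired by Juutinen's comparison argument for Jensen's equation): there the slope $g'>1$ on the relevant range plays exactly the role of your factor $\beta>1$ for the min-term, while the strictness of the averaged term is extracted from the concavity of $g$ combined with $\max\{v_i-v_{i-1},v_i-v_{i+1}\}\geq\epsilon$, i.e.\ from $v$'s own supersolution inequality, rather than from an external barrier. You instead add the explicit discrete barrier $\phi_i=i(n+1-i)$, whose uniform concavity gap gives the averaged term directly, and use $\beta>1$ only to absorb the oscillation of $\tau\phi$ in the min-term; your splitting via $\max(a_1+b_1,a_2+b_2)\geq\max(a_1,a_2)+\min(b_1,b_2)$, the bound $|\phi_i-\phi_j|\leq\|\phi\|_\infty$ (valid since $\phi\geq0$), and the order of choices (first $\beta-1$ in terms of $\gamma$ and the bounds on $v$, then $\tau$) are all sound, yielding $\mu=\min\{(\beta-1)\epsilon/2,\tau\}>0$ together with both closeness estimates. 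The trade-off: your argument is more elementary (no quadratic algebra) and uses only the one-sided bounds actually assumed on $v$ (the paper's constant $C=\max_{i\in\mathcal{I}}|v_i|$ tacitly invokes a two-sided interior bound), but it is tied to the segment, since it exploits the linearity of $u_i-\frac12(u_{i-1}+u_{i+1})$ and an ad hoc barrier vanishing on the boundary; on a general graph the averaged term becomes $u_i-\frac12\big(\max_{j\in\{i'\}}u_j+\min_{j\in\{i'\}}u_j\big)$, which is no longer linear, and a barrier with a uniform gap would have to be constructed for the given graph, whereas the paper's change of variables acts only on the values $v_i$ (and, being increasing, commutes with max and min over neighbors), so it carries over to arbitrary graphs essentially verbatim.
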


\begin{proof}
We look for $\tilde{v}$ of the form $\tilde{v}_i=g(v_i)$ for all $i\in\mathcal{I}$, where
\begin{equation*}
g(\alpha)=(1+\varepsilon)\, \alpha-\frac{\varepsilon}{4C}\, \alpha^2
\end{equation*}
for $\varepsilon>0$ and $C=\max_{i\in\mathcal{I}}|v_i|$ (recall that $v$ is bounded from above by hypothesis). The constant $\gamma$ in the statement of the lemma will be chosen later as a function of $\varepsilon$.

We assume, without loss of generality, that $\max\{v_i-v_{i-1},\, v_i-v_{i+1}\}=v_i-v_{i-1}$. Note also that $\max\{v_i-v_{i-1},\, v_i-v_{i+1}\}\geq\epsilon$, in view of (\ref{1dim_DiscreteDPP}). According to these,
\begin{align*}
&\max\{\tilde{v}_i-\tilde{v}_{i-1},\, \tilde{v}_i-\tilde{v}_{i+1}\}-\epsilon = \max\{g(v_i)-g(v_{i-1}),\, g(v_i)-g(v_{i+1})\}-\epsilon \\
 &\qquad\qquad= \max\left\{(1+\varepsilon)\, (v_i-v_{i-1})-\frac{\varepsilon}{4C}\, (v_i^2-v_{i-1}^2),\right.\\
 &\qquad\qquad\qquad\qquad\qquad\qquad\qquad\left.(1+\varepsilon)\, (v_i-v_{i+1})-\frac{\varepsilon}{4C}\, (v_i^2-v_{i+1}^2)\right\}-\epsilon\\
 &\qquad\qquad= \max\left\{\left((1+\varepsilon)-\frac{\varepsilon}{4C}\, (v_i+v_{i-1})\right)\, (v_i-v_{i-1}),\right.\\
 &\qquad\qquad\qquad\qquad\qquad\qquad\qquad\left.\left((1+\varepsilon)-\frac{\varepsilon}{4C}\, (v_i+v_{i+1})\right)\, (v_i-v_{i+1})\right\}-\epsilon\\
 &\qquad\qquad \geq\left((1+\varepsilon)-\frac{\varepsilon}{4C}\, 2C\right)\, (v_i-v_{i-1})-\epsilon\\
 &\qquad\qquad =\left(1+\frac{\varepsilon}{2}\right)\, (v_i-v_{i-1})-\epsilon
 \geq\left(1+\frac{\varepsilon}{2}\right)\, \epsilon-\epsilon
 =\frac{\varepsilon}{2}\, \epsilon.
\end{align*}

On the other hand,
\begin{align*}
&(\tilde{v}_i-\tilde{v}_{i-1})+(\tilde{v}_i-\tilde{v}_{i+1})=\left(g(v_i)-g(v_{i-1})\right)+\left(g(v_i)-g(v_{i+1})\right)\\
 &\qquad =(1+\varepsilon)\, \left((v_i-v_{i-1})+(v_i-v_{i+1})\right)-\frac{\varepsilon}{4C}\, \left(v_i^2-v_{i-1}^2\right)-\frac{\varepsilon}{4C}\, \left(v_i^2-v_{i+1}^2\right)\\
 &\qquad =(1+\varepsilon)\, \left(2v_i-v_{i-1}-v_{i+1}\right)+\frac{\varepsilon}{4C}\, \left(-2v_i^2+v_{i-1}^2+v_{i+1}^2\right).
\end{align*}
Note that
\begin{align*}
 &-2v_i^2+v_{i-1}^2+v_{i+1}^2=-2v_i^2+v_{i-1}^2+v_{i+1}^2\pm\left(2v_i^2+2v_i\, v_{i-1}+2v_i\, v_{i+1}\right)\\[0.25em]
 &\qquad\qquad =-4v_i^2+(v_{i-1}-v_i)^2+(v_{i+1}-v_i)^2+2v_i\, v_{i-1}+2v_i\, v_{i+1}\\[0.25em]
 &\qquad\qquad =-2v_i\, (2v_i-v_{i-1}-v_{i+1})+(v_{i-1}-v_i)^2+(v_{i+1}-v_i)^2.
\end{align*}
From these, it follows that
\begin{align*}
&(\tilde{v}_i-\tilde{v}_{i-1})+(\tilde{v}_i-\tilde{v}_{i+1}) =(1+\varepsilon)\, \left(2v_i-v_{i-1}-v_{i+1}\right)\\
 &\hspace{85pt}+\frac{\varepsilon}{4C}\, \left(-2v_i\, (2v_i-v_{i-1}-v_{i+1})+(v_{i-1}-v_i)^2+(v_{i+1}-v_i)^2\right)\\
 &\qquad\qquad =\left(1+\varepsilon-\frac{\varepsilon\, v_i}{2C}\right)\, \left(2v_i-v_{i-1}-v_{i+1}\right)+\frac{\varepsilon}{4C}\, \left((v_{i-1}-v_i)^2+(v_{i+1}-v_i)^2\right)\\
 &\qquad\qquad \geq\left(1+\varepsilon-\frac{\varepsilon\, v_i}{2C}\right)\, \left(2v_i-v_{i-1}-v_{i+1}\right)+\frac{\varepsilon}{4C}\, \left(\max\{v_i-v_{i-1},\, v_i-v_{i+1}\}\right)^2\\
 &\qquad\qquad \geq\frac{\varepsilon}{4C}\, \epsilon^2,
\end{align*}
where in the last inequality we have used that, since $\mathcal{G}^i[v]\geq0$ by hypothesis for all $i\in\mathcal{I}$, $2v_i-v_{i-1}-v_{i+1}\geq0$ and $\max\{v_i-v_{i-1},\, v_i-v_{i+1}\}\geq\epsilon$.

Then,  we get that
\[
\begin{split}
&\min\left\{\tilde{v}_i+\max\left\{-\tilde{v}_{i-1},-\tilde{v}_{i+1}\right\}-\epsilon,\, \tilde{v}_i-\frac{1}{2}\left(\tilde{v}_{i-1}+\tilde{v}_{i+1}\right)\right\} \\
&\hspace{140pt}\geq \min\left\{\frac{\varepsilon}{2}\, \epsilon,\frac{\varepsilon}{8C}\, \epsilon^2\right\}
	= \frac{\varepsilon}{2}\, \epsilon\, \min\left\{1,\frac{\epsilon}{4C}\right\}=\mu
\end{split}
\]
for all $i\in\mathcal{I}$, where $\mu=\mu(\varepsilon,v)>0$.

About the second part of this lemma, since $g(\alpha)-\alpha\leq\frac{3}{4}\, \varepsilon\, C$ for $\alpha\leq C$ and because $\tilde{v}_i=g(v_i)$, it follows that $\tilde{v}_i-v_i\leq\frac{3}{4}\, \varepsilon\, C$ for all $i\in\mathcal{N}$. Similarly, since $g(\alpha)-\alpha\geq-\varepsilon\, D\left(1+\frac{D}{4C}\right)$ for $\alpha\geq -D=-\left|\min_{i\in\mathcal{N}\backslash\mathcal{I}}v_i\right|$, it follows that $\tilde{v}_i-v_i\geq-\varepsilon\, D\left(1+\frac{D}{4C}\right)$ for all $i\in\mathcal{N}\backslash\mathcal{I}$. The result holds taking
\begin{equation*}
\gamma=\varepsilon\max\left\{\frac{3}{4}\, C,\, D\left(1+\frac{D}{4C}\right)\right\}>0,
\end{equation*}
provided $\varepsilon$ is small enough.
\end{proof}

We proceed now with the proof of the discrete comparison principle, Theorem \ref{Discrete_Comparison_Principle},  inspired by the proof  of the comparison principle for equation \eqref{eq.Jensen.min.intro} in \cite{Juutinen1998}.

\begin{proof}[Proof of Theorem \ref{Discrete_Comparison_Principle}]
Arguing by contradiction, we suppose that $\max_{i\in\mathcal{N}}(u_i-v_i)>0$. Since $u_i\leq v_i$ for all $i\in\mathcal{N}\backslash\mathcal{I}$, it follows that there is an index $j\in\mathcal{I}$ such that $u_j-v_j=\max_{i\in\mathcal{N}}(u_i-v_i)>0$. On the other hand, by Lemma \ref{Discrete_Comparison_Principle_auxLemma}, for every $\gamma>0$ there exists $\tilde{v}$ such that $\tilde{v}_i-v_i\leq\gamma$ for all $i\in\mathcal{N}$. As a result, $u_j-v_j>\gamma\geq\tilde{v}_j-v_j$ for $\gamma$ small enough and therefore $u_j>\tilde{v}_j$.

This implies that there is an index $k\in\mathcal{N}$ such that $u_k-\tilde{v}_k=\max_{i\in\mathcal{N}}(u_k-\tilde{v}_k)>0$. In fact, $k\in\mathcal{I}$ since by Lemma \ref{Discrete_Comparison_Principle_auxLemma}, we can assume $\tilde{v}_i-v_i\geq-\gamma$ for all $i\in\mathcal{N}\backslash\mathcal{I}$ and therefore $u_k-\tilde{v}_k>\gamma\geq v_i-\tilde{v}_i\geq u_i-\tilde{v}_i$ for all $i\in\mathcal{N}\backslash\mathcal{I}$. For the sake of simplicity let us assume this index $k$ to be the same $j$ as before. 

It follows by definition that
\begin{equation*}
u_j-\tilde{v}_j\geq u_i-\tilde{v}_i\quad\textnormal{for all}\ \ i\in\mathcal{N}.
\end{equation*}
In particular $u_j-u_{j-1}\geq\tilde{v}_j-\tilde{v}_{j-1}$ and $u_j-u_{j+1}\geq\tilde{v}_j-\tilde{v}_{j+1}$. According to this and writing (\ref{1dim_DiscreteDPP}) as
\begin{equation*}
\min\left\{\max\left\{u_i-u_{i-1},\, u_i-u_{i+1}\right\}-\epsilon,\, (u_i-u_{i-1})+(u_i-u_{i+1})\right\}=0,
\end{equation*}
we have that $\mathcal{G}^j[\tilde{v}]\leq\mathcal{G}^j[u]$, a contradiction with the  fact that $\mathcal{G}^i[u]\leq0<\mathcal{G}^i[\tilde{v}]$ for all $i\in\mathcal{I}$ by Lemma \ref{Discrete_Comparison_Principle_auxLemma}.
\end{proof}

%
%

An important consequence of Theorem \ref{Discrete_Comparison_Principle} is the following bound.

\begin{corollary}\label{u_i_bdd}
Let $u_i$ be a solution to the Dirichlet problem (\ref{discrete_Dirichlet_Problem}) and let $F_i$ be bounded for all $i\in\mathcal{N}\backslash\mathcal{I}$. Then, for all $i\in\mathcal{N}$ and $K=\max_{i\in\mathcal{N}\backslash\mathcal{I}}|F_i|$,
\begin{equation*}
\epsilon\min\{(n+1)-i,\, i\}-K\leq u_i\leq\epsilon\min\{(n+1)-i,\, i\}+K.
\end{equation*}
\end{corollary}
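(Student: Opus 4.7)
The plan is to apply the discrete comparison principle (Theorem \ref{Discrete_Comparison_Principle}) twice, using explicit barriers built from the tent function
\[
w_i := \epsilon\,\min\{(n+1)-i,\, i\}, \qquad i\in\mathcal{N}.
\]
The key observation will be that $w$ itself solves $\mathcal{G}^i[w]=0$ on all of $\mathcal{I}$, and since $\mathcal{G}^i[\cdot]$ is invariant under addition of a constant, $w\pm K$ are also solutions. Then $w+K$ serves as a supersolution that dominates $u$ on $\mathcal{N}\setminus\mathcal{I}$ (where $w=0$ and $F_i\le K$), while $w-K$ serves as a subsolution that is dominated by $u$ on $\mathcal{N}\setminus\mathcal{I}$; Theorem \ref{Discrete_Comparison_Principle} will then produce both inequalities on all of $\mathcal{N}$.

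The computational core is to verify $\mathcal{G}^i[w]=0$ for every $i\in\mathcal{I}$. I will split into three regimes: the strictly ascending branch (where $w_{i-1}<w_i<w_{i+1}$, i.e., $w_j=\epsilon j$), the strictly descending branch (where $w_{i-1}>w_i>w_{i+1}$), and the peak node(s). On each strictly monotone branch the neighbors differ from $w_i$ by exactly $\pm\epsilon$, so
\[
w_i-\min\{w_{i-1},w_{i+1}\}-\epsilon=0 \quad\textnormal{and}\quad w_i-\tfrac{1}{2}(w_{i-1}+w_{i+1})=0.
\]
At the peak node(s) both neighbors lie weakly below $w_i$; a short case check, depending on the parity of $n+1$ (one peak node if $n+1$ is even, two ``tied'' peak nodes if $n+1$ is odd), gives $w_i-\min\{w_{i-1},w_{i+1}\}=\epsilon$ and $w_i-\tfrac{1}{2}(w_{i-1}+w_{i+1})\geq 0$, so the minimum defining $\mathcal{G}^i[w]$ is again $0$. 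This concave tent profile is the natural candidate precisely because the DPP \eqref{1dim_DiscreteDPP} balances a ``forced $\epsilon$-jump'' clause against a concavity clause, both of which saturate on a piecewise-linear tent of slope $\pm\epsilon$.

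With $\mathcal{G}^i[w]=0$ in hand, the corollary follows by invoking Theorem \ref{Discrete_Comparison_Principle} twice. For the upper bound, I take the subsolution to be $u$ itself and the supersolution to be $v_i:=w_i+K$: on $\mathcal{N}\setminus\mathcal{I}$ one has $v_i=K\geq F_i=u_i$, so the theorem yields $u_i\leq w_i+K$ on all of $\mathcal{N}$. For the lower bound, I take the subsolution to be $\tilde u_i:=w_i-K$ and the supersolution to be $u$: on $\mathcal{N}\setminus\mathcal{I}$ one has $\tilde u_i=-K\leq F_i=u_i$, so the theorem yields $w_i-K\leq u_i$ on all of $\mathcal{N}$. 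Combining these two bounds gives the stated estimate. I do not anticipate a genuine obstacle; the only mildly delicate point is the parity case-split at the peak, but it yields the same conclusion in both cases, and the translation invariance $\mathcal{G}^i[w+c]=\mathcal{G}^i[w]$ needed to pass from $w$ to $w\pm K$ is immediate from the form of $\mathcal{G}^i$.
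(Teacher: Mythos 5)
Your proof is correct and follows essentially the same route as the paper: the paper also compares $u$ against the barriers $\epsilon\min\{(n+1)-i,\,i\}\pm K$, presented as solutions of the Dirichlet problem with constant boundary data $\pm K$, and applies Theorem \ref{Discrete_Comparison_Principle} twice. Your explicit verification that the tent function satisfies $\mathcal{G}^i[w]=0$ (including the peak/parity check) simply fills in a computation the paper asserts without detail.
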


\begin{proof}
Consider, for $F_i$ the same in (\ref{discrete_Dirichlet_Problem}), the Dirichlet problem
\begin{equation*}
\begin{cases}
\mathcal{G}^i[v]=0,& i\in\mathcal{I};\\
v_i=K,& i\in\mathcal{N}\backslash\mathcal{I},
\end{cases}
\end{equation*}
a solution of which is $v_i=\epsilon\min\{(n+1)-i,\, i\}+K$. Then, since it is bounded for all $i\in\mathcal{N}$, by Theorem \ref{Discrete_Comparison_Principle} it follows that $u_i\leq v_i$ for all $i\in\mathcal{N}$ and therefore the upper bound for $u_i$ is proved.

About the lower bound of $u_i$, consider now the Dirichlet problem 
\begin{equation*}
\begin{cases}
\mathcal{G}^i[v]=0,& i\in\mathcal{I};\\
v_i=-K,& i\in\mathcal{N}\backslash\mathcal{I},
\end{cases}
\end{equation*}
a solution of which is $v_i=\epsilon\min\{(n+1)-i,\, i\}-K$. On the other hand, since $u_i$ is a solution to (\ref{discrete_Dirichlet_Problem}) by hypothesis, it is in particular a supersolution and bounded from above due to the first part of the proof. Then, by Theorem \ref{Discrete_Comparison_Principle} it follows that $v_i\leq u_i$ for all $i\in\mathcal{N}$ and the proof is finished.
\end{proof}

In order to prove that the discrete Totalitarian Tug-of-War game has a value, i.e., $u_i^I=u_i^{II}$ for all $i\in\mathcal{N}$, it will be enough to show that 
$u_i^{II}\leq u_i^I$  holds for all $i\in\mathcal{N}$, since  the converse holds by definition.

\begin{proof}[Proof of Theorem \ref{Thm_of_unique_value_1Dgame}]
Let $u_i^I$, $u_i^{II}$ be the respective value of the Totalitarian Tug-of-War game for Players I and II, for $i\in\mathcal{N}$. By Proposition \ref{discrete_auxProp}, both $u_i^I$ and $u_i^{II}$ are solutions to the discrete Dirichlet problem (\ref{discrete_Dirichlet_Problem}) and hence, according to Corollary \ref{u_i_bdd} they are bounded for all $i\in\mathcal{N}$. 
In particular, they are respectively a supersolution and subsolution of (\ref{1dim_DiscreteDPP}) for all $i\in\mathcal{N}$ and $u_i^{II}\leq u_i^I$ for all $i\in\mathcal{N}\backslash\mathcal{I}$. Thus, we can apply Theorem \ref{Discrete_Comparison_Principle} for $u_i=u_i^{II}$ and $v_i=u_i^I$, so that $u_i^{II}\leq u_i^I$ for all $i\in\mathcal{N}$. On the other hand, by definition, $u_i^I\leq u_i^{II}$ for all $i\in\mathcal{N}$. It then follows that $u_i^{II}=u_i^I$ for all $i\in\mathcal{N}$ so that the discrete Totalitarian Tug-of-War game has a unique value.
\end{proof}

It is worth mentioning here that a more exhaustive discussion of discretized degenerate elliptic equations and its applications to numerical analysis can be found in \cite{Oberman2006}, for instance.


\section{Examples}\label{Some_game_examples_on_graphs}

This section is devoted to presenting explicit examples of the Totalitarian Tug-of-War on graphs. Here, we rely on the fact that the value of the game exists and is the unique solution of the DPP, as we proved in Section \ref{Totalitarian ToW has a value}. In this way, we can identify candidates to optimal strategies by direct inspection and it suffices to check if the corresponding expected value satisfies the DPP.

\subsection{Game on a graph segment}\label{One-dimensional game with multiple running nodes}

In this section we analyze in detail the Totalitarian Tug-of-War game played on a graph segment with $n$ running nodes, see Figure \ref{1Dgame_MultN}. Recall that the values of $\epsilon$, $F_0$ and $F_{n+1}$ are fixed and known to both players beforehand, and the final payoffs are independent of the $\epsilon$-payments that took place throughout the game. Notice that we can assume $F_0\leq F_{n+1}$ due to the symmetry of the graph.

\begin{figure}
\centering
\begin{tikzpicture}[scale=.9]
	\draw[semithick] (0,0) -- (1.5,0);
 \draw[fill] (0,0) circle [radius=5pt];
 \draw (0,0.75) node {$x_0$};
 \draw[fill] (1.5,0) circle [radius=3pt];
 \draw (1.5,0.75) node {$x_1$};
 \draw[semithick] (1.5,0) -- (3,0);
 \draw[fill] (3,0) circle [radius=3pt];
 \draw (3,0.75) node {$x_2$}; 
 \draw[semithick] (3,0) -- (3.5,0);
 \draw[loosely dotted, semithick] (4,0) -- (6,0);
 \draw[semithick] (6.5,0) -- (7,0);
 \draw[fill] (7,0) circle [radius=3pt];
 \draw (7,0.75) node {$x_{n-1}$};
 \draw[semithick] (7,0) -- (8.5,0);
 \draw[fill] (8.5,0) circle [radius=3pt];
 \draw (8.5,0.75) node {$x_n$};
 \draw[semithick] (8.5,0) -- (10,0);
 \draw[fill] (10,0) circle [radius=5pt];
 \draw (10,0.75) node {$x_{n+1}$};
\end{tikzpicture}
\caption{Game positions of the game on a segment with multiple running nodes.}
\label{1Dgame_MultN}
\end{figure}
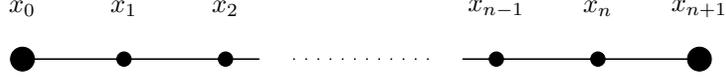

The main result in this section is the following theorem, that describes the value of the game and strategies that realize it for any given $F_{n+1},F_0$ and $\epsilon$.

\begin{theorem}\label{examples.main.thm.segment}
Consider a Totalitarian Tug-of-War with parameter $\epsilon$ played on a graph segment with $n$ running nodes $x_0,x_1,\ldots, x_{n}, x_{n+1}$, $n\geq2$.
Let $F_0$ and $F_{n+1}$ be the terminal payoffs at $x_0$ and $x_{n+1}$ respectively, which we can assume
 $F_0\leq F_{n+1}$. Define the quantity
 \[
Q=\frac{F_{n+1}-F_0}{\epsilon}\geq0.
 \]
 Then, the value of the game and strategies that realize it can be described exhaustively in terms of $Q$ as follows:
\begin{itemize}
 \item[(a)] If $Q> n+1$ then the value of the game is given by
\[
u_i=
\left(\frac{n+1-i}{n+1}\right)F_0+\frac{i}{n+1}\,F_{n+1},\quad\textrm{for}\ 1\leq i\leq n.
\]
In this case the value of the game is realized when the players play a classical random Tug-of-War, that is, Player I's strategy is to always move towards $x_{n+1}$ and Player II's strategy is to always move towards $x_{0}$.

 \item[(b)] If $n-1< Q\leq n+1$ then 
\[
u_i=F_0+i\, \epsilon, \quad \textnormal{for}\ 1\leq i\leq n.
\]
In this case the value of the game is realized when Player I always lets Player II decide the next move, who moves towards the terminal node $x_0$.

Moreover, if $Q=n+1$ there is a family of pairs of strategies that yield the same value of the game. This family includes the pair of strategies just mentioned as a particular case, and it can be described as follows: For each fixed integer $j\in[0,n]$ Player I lets Player II decide the next move at the nodes $x_i$ with $1\leq i\leq j$, who moves towards the terminal node $x_{0}$, and
Players I and II play a classical random Tug-of-War at the nodes $x_i$ with $j+1\leq i\leq n$ (Players I and II move towards $x_{n+1}$ and $x_0$ respectively).

 \item[(c)] If there exists an integer $k\in[1,n-1]$ such that
 $2k-n-1< Q\leq 2k-n+1,$
then 
\[
u_i=
\left\{
\begin{split}
&F_0+i\, \epsilon, \hspace{70pt} \textnormal{for}\ 1\leq i\leq k;\\ 
&F_{n+1}+(n+1-i)\, \epsilon, \quad \textnormal{for}\ k+1\leq i\leq n.
\end{split}
\right.
\]

In this case the value of the game is realized when Player I always lets Player II decide the next move, who moves towards the terminal node $x_0$ at the nodes $x_i$ with $1\leq i\leq k$, and towards $x_{n+1}$ at the nodes $x_i$ with $k+1\leq i\leq n$.


Moreover, if $Q=2k-n+1$ there is a family of pairs of strategies that yield the same value of the game. This family includes the pair of strategies just mentioned as a particular case, and it can be described as follows: For each fixed integer $j\in[0,n-2]$,
\begin{itemize}
\item Player I lets Player II decide the next move at the nodes $x_i$ with $1\leq i\leq j$, who moves towards the terminal node $x_{0}$.
\item Players I and II play a classical random Tug-of-War at the nodes $x_i$ with $j+1\leq i\leq k\leq n-1$ (Players I and II move towards $x_{n+1}$ and $x_0$ respectively).
\item Player I lets Player II decide the next move at the nodes $x_i$ with $k+1\leq i\leq n$, who moves towards the terminal node $x_{n+1}$.
\end{itemize}

\end{itemize}
 
\end{theorem}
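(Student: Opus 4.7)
The plan is to exploit Theorem~\ref{Thm_of_unique_value_1Dgame}: since the game admits a unique value characterized by the Dirichlet problem \eqref{discrete_Dirichlet_Problem}, it is enough to (i) write down explicit candidates $u_i$ for the value in each of the ranges of $Q$, (ii) check that each candidate solves the DPP \eqref{1dim_DiscreteDPP} with the prescribed boundary data $u_0=F_0$, $u_{n+1}=F_{n+1}$, and (iii) exhibit strategies $S_I,S_{II}$ whose expected payoff $V^{x_i}_{S_I,S_{II}}$ equals the candidate, which by the definitions of $u^I,u^{II}$ will force optimality.

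For step (ii), all candidates are either globally affine (case (a)) or piecewise affine with slope $\pm\epsilon$ (cases (b), (c)). On any such affine piece the quantity $u_i-\tfrac12(u_{i-1}+u_{i+1})$ vanishes, while $u_i-\min\{u_{i-1},u_{i+1}\}-\epsilon$ vanishes whenever the slope equals $\pm\epsilon$ and $i$ lies strictly inside the piece. Thus the verification reduces to checking signs only at the transition indices: $i=1,n$ in case (b), and $i=k,k+1$ in case (c). A direct computation shows that the tug-of-war quantity at $i=n$ in case (b) equals $\tfrac{\epsilon}{2}\bigl((n+1)-Q\bigr)$, which is nonnegative exactly when $Q\le n+1$, while the force-move quantity is $\ge 0$ iff $Q>n-1$; this matches the announced range. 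In case (a) monotonicity gives $\min\{u_{i-1},u_{i+1}\}=u_{i-1}$, and $u_i-u_{i-1}-\epsilon=\tfrac{\epsilon}{n+1}(Q-(n+1))>0$ under $Q>n+1$, so the tug-of-war quantity is the minimizer. The crucial case (c) proceeds by the same pattern at the junction nodes $i=k,k+1$: the tug-of-war quantities there reduce to $\tfrac{\epsilon}{2}(2k-n+1-Q)$ and $\tfrac{\epsilon}{2}(Q-(2k-n-1))$ respectively, and their simultaneous nonnegativity pins $Q$ to the window $2k-n-1<Q\le 2k-n+1$, while all slope conditions hold automatically.

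For step (iii), the expected payoff is an elementary hitting-time computation. In case (a) the game is a symmetric random walk on $\{x_0,\ldots,x_{n+1}\}$ with no $\epsilon$-payments, so gambler's ruin gives hitting probabilities $(n{+}1{-}i)/(n{+}1)$ and $i/(n{+}1)$ for $x_0$ and $x_{n+1}$, yielding the affine candidate. In case (b) the trajectory is deterministic: from $x_i$, Player~II walks to $x_0$ in $i$ steps, each contributing one $\epsilon$-payment, giving $F_0+i\epsilon$. Case (c) splits similarly into two deterministic branches. For the ``alternative'' strategy families at the threshold values $Q=n+1$ or $Q=2k-n+1$, I propose to compute the payoff by induction on the integer~$j$: on the initial segment $1\le i\le j$ the payoff accumulates $j\epsilon$ of forced payments plus the payoff from entering the tug-of-war regime at $x_{j+1}$, which by gambler's ruin equals a convex combination of $F_0$ and $F_{n+1}+ \text{(tail forced payments in case (c))}$; the threshold value of $Q$ is precisely what makes this combination collapse back to the pure-strategy answer.

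The main obstacle will be the bookkeeping at the transition nodes $i=k,k+1$ in case (c), together with the careful sign analysis verifying which of the two expressions in the $\min$ is the active constraint. A secondary point needing explicit attention is that, whenever Player~I elects to force Player~II, the direction Player~II is prescribed to take is genuinely the minimizing neighbor, which amounts to checking that the candidate $u_i$ is monotone on each affine piece and that the comparison $u_{i\pm 1}$ vs.\ $u_{n+1}$ (resp.\ vs.\ $u_0$) at the penultimate indices has the sign claimed; both facts are immediate from the formulas once the ranges of $Q$ are fixed.
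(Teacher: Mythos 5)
Your proposal is correct and takes essentially the same route as the paper: both rely on the uniqueness of the solution to the DPP Dirichlet problem from Section \ref{Totalitarian ToW has a value} to identify the explicit piecewise-affine candidates (checked against the DPP, with the only nontrivial sign conditions at the transition nodes pinning down the $Q$-ranges) as the game value, and both compute the payoffs of the announced strategies by deterministic-path and gambler's-ruin arguments, the paper merely organizing this through the region indices $j_1,j_2$ and the induction Lemmas \ref{Lemma_reductionOfStrategies_smallNodes} and \ref{Lemma_reductionOfStrategies_largeNodes}. One small caveat on wording: exhibiting a strategy pair whose expected payoff equals $u$ does not by itself ``force optimality''; it is the DPP verification together with uniqueness that identifies $u$ as the value, the strategy computation then showing this value is realized.
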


\begin{remark}
 Note that in the case $Q>0$, as $\epsilon\to0$ we end up on the first situation, so the limit is, naturally, a classical random Tug-of-War. 
\end{remark}

\begin{remark}
The case $Q=0$, that is, when $F_0=F_{n+1}$, corresponds to case (c) taking $k=(n-1)/2$ or $k=n/2$, depending on the parity of $n$.
\end{remark}

Let us start the proof of Theorem \ref{examples.main.thm.segment} by examining heuristically the choices available to both players that motivate the analysis below. On the one hand, at any given running node, Player I has three options: to move towards $x_0$, towards $x_{n+1}$, or to let Player II decide. Moving towards $x_0$ is not reasonable, since $F_0\leq F_{n+1}$ implies that the new game position chosen by Player I would actually be more favorable to Player II. In fact, Player I would reach the same position and receive an $\epsilon$-payoff by having Player II decide next move. 

On the other hand, Player II has two options: to move towards $x_0$ or towards $x_{n+1}$. In principle, moving towards $x_{n+1}$ (the terminal node with largest terminal payoff) seems against Player II's interest. However, there can be situations where it may be preferable for Player II to minimize the damage and avoid $\epsilon$-payments by ending the game at $x_{n+1}$ as quickly as possible.

This heuristic reasoning yields three regions in the segment according to both player's choices, which can be described in terms of two indices $j_1\leq n$, $j_2\geq 1$ such that $j_1<j_2$ as follows: 
\begin{enumerate}
\item At the nodes $x_i$, for $0\leq i\leq j_1$, Player I allows Player II to move, who moves towards the terminal node $x_0$;
\item At the nodes $x_i$, for $j_1+1\leq i\leq j_2-1$, Players I and II play classical random Tug-of-War, where the players want to move towards the terminal nodes $x_{n+1}$ and $x_0$, respectively;
\item At the nodes $x_i$, for $j_2\leq i\leq n+1$, Player I allows Player II to move, who moves towards the terminal node $x_{n+1}$.
\end{enumerate}

In fact, we are going to see that the expected value of the game at each position can be completely determined by looking only at pairs of strategies of the form just described, since these include all the reasonable ones. ``Unreasonable" strategies include playing classical random Tug-of-War with the players switching their roles, or playing classical random Tug-of-War when both players want to move towards the same terminal node, in which case Player I would do better by changing strategy and forcing Player II to choose the next move.

One could argue if Player I would be able to get a higher expected payoff 
 by choosing a strategy other than allowing Player II to decide the next move in \emph{every} node $0\leq i\leq j_1$ and $j_2\leq i\leq n+1$.
The answer is negative, and this is the content of the following extremal results, Lemmas \ref{Lemma_reductionOfStrategies_smallNodes} and \ref{Lemma_reductionOfStrategies_largeNodes}.

In Lemma \ref{Lemma_reductionOfStrategies_smallNodes} we prove that, as long as Player II's strategy is to move towards $x_0$ at every $x_i$ with $1\leq i\leq j_1$ and 
Player I's strategy implies letting Player II decide the next move at $x_{j_1}$, the expected value of the game at every node $x_i$ with $1\leq i\leq j_1-1$
is independent of the action chosen by Player I at those game positions.

\begin{lemma}\label{Lemma_reductionOfStrategies_smallNodes}
Let $1\leq j_1\leq n$ and let $S_I,S_{II}$ be any pair of strategies such that Player I allows Player II to move at node $x_{j_1}$, while Player II moves towards the terminal node $x_0$ for all $x_j$ with $1\leq j\leq j_1$. Then, $u_k=F_0+k\,\epsilon$ for all $0\leq k\leq j_1$.
\end{lemma}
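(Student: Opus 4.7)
The approach is to compute the expected payoff $u_k$ directly for $0\leq k\leq j_1$ by conditioning on the first move of the game. First, $u_0=F_0$ since $x_0$ is terminal. At $x_{j_1}$ the hypotheses of the lemma force Player I to let Player II decide the next move and Player II to move to $x_{j_1-1}$, so Player I collects the $\epsilon$-payment and the game continues from $x_{j_1-1}$. Conditioning on this move gives immediately $u_{j_1}=u_{j_1-1}+\epsilon$.

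For $1\leq k\leq j_1-1$ the hypotheses do not prescribe Player I's action at $x_k$. However, playing a classical round and moving toward $x_0$ upon winning is strictly dominated by letting Player II decide (both send the token to $x_{k-1}$ with probability $1$, but only the latter earns Player I an $\epsilon$-payment), so without loss of generality Player I's action at $x_k$ is either (i) let Player II decide, which gives $u_k=u_{k-1}+\epsilon$, or (ii) play a classical Tug-of-War round in which Player I moves to $x_{k+1}$ and Player II moves to $x_{k-1}$ upon winning, which by conditioning on the coin toss gives $u_k=\frac{1}{2}(u_{k+1}+u_{k-1})$.

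Setting $d_k:=u_k-u_{k-1}$, case (i) yields $d_k=\epsilon$ and case (ii) yields $d_k=d_{k+1}$, while the first paragraph gives $d_{j_1}=\epsilon$. A backward induction on $k$ from $j_1$ down to $1$ then produces $d_k=\epsilon$ for every $1\leq k\leq j_1$, and telescoping from $u_0=F_0$ gives $u_k=F_0+k\epsilon$ as claimed. The main technical point I expect to address is verifying that the game ends almost surely under the given strategies so that $u_k$ is finite rather than $-\infty$ according to \eqref{value_of_TT_game_PlayerI}; this follows from a standard Markov chain argument, as Player II forces a move toward $x_0$ with probability at least $\frac{1}{2}$ at every node in $\{x_1,\dots,x_{j_1}\}$ while $x_{j_1}$ behaves as a reflecting boundary, so the token reaches $x_0$ with probability one.
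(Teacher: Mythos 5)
Your argument is correct, and it reaches the conclusion by a somewhat different route than the paper. You do a single first-step analysis at each node, pass to the increments $d_k=u_k-u_{k-1}$, observe that each admissible action of Player I at $x_k$ forces either $d_k=\epsilon$ or $d_k=d_{k+1}$, and close with a backward induction from $d_{j_1}=\epsilon$ plus telescoping; this handles arbitrary node-by-node mixtures of "let Player II move" and "classical round" in one sweep. The paper instead argues by strong induction on $j_1$: if Player I lets Player II move at some intermediate node $x_k$, it invokes the induction hypothesis on $\{x_0,\dots,x_k\}$, relabels $x_k$ as a terminal node with payoff $F_0+k\epsilon$, and applies the hypothesis again to the shorter segment; if Player I never does so, it solves the resulting linear system $u_i=\tfrac12(u_{i-1}+u_{i+1})$ with $u_0=F_0$, $u_{j_1}=u_{j_1-1}+\epsilon$ explicitly. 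Your version is arguably more direct (no relabeling or segment surgery), and you make explicit two points the paper leaves implicit: that the game terminates almost surely under these strategies, so the conditional-expectation identities are legitimate, and that the identification $u_{k\pm1}$ of the continuation value uses the stationarity (history-independence) of the strategies. One caveat you share with the paper: the "without loss of generality" discarding of the action in which Player I wins the toss and moves toward $x_0$ is a genuine restriction of the class of strategies, not a harmless normalization — for such a strategy the conclusion $u_k=F_0+k\epsilon$ fails (one gets $u_k=u_{k-1}$). The paper makes exactly the same restriction (its case (2) asserts Player I "will necessarily choose to move towards $x_{n+1}$", in line with its earlier discussion of "reasonable" strategies), so your proof is at the same level of rigor as the published one on this point; a fully literal reading of "any pair of strategies" would require adding this reasonableness hypothesis to the statement in both cases.
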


\begin{proof}
We argue by strong mathematical induction. Let $P_i$ be the property stated in the lemma, that is:

$P_{i}:$ ``For any pair of strategies $S_I,S_{II}$ such that Player I allows Player II to move at $x_{i}$ while Player II moves towards the terminal node $x_0$ at all $x_j$ with $1\leq j\leq i$, we have that $u_k=F_0+k\,\epsilon$ for all $0\leq k\leq i$".

 Assume $P_{i}$ holds for all $0\leq i\leq j_1-1$. We want to prove that $P_{j_1}$ is also true. There are two possible situations in terms of Player I's strategy at the game positions $x_i$ for $1\leq i\leq j_1-1$.

On the one hand, suppose that Player I allows Player II to move at node $x_k$ for some $1\leq k\leq j_1-1$. Then, since $P_k$ is true by the strong induction hypothesis, the value of the game at the nodes $x_j$ for $0\leq j\leq k$, is $u_j=F_0+j\, \epsilon$.

\begin{figure}
\centering
\begin{tikzpicture}[scale=1]
	\draw[semithick] (0,0) -- (1.5,0);
 \draw[fill] (0,0) circle [radius=5pt];
 \draw (0,1) node {$y_0$};
 \draw (0,0.5) node {\footnotesize{$x_k$}};
 \draw[fill] (1.5,0) circle [radius=3pt];
 \draw (1.5,1) node {$y_1$};
 \draw (1.5,0.5) node {\footnotesize{$x_{k+1}$}};
 \draw[semithick] (1.5,0) -- (2,0);
 \draw[loosely dotted, semithick] (2.5,0) -- (3.5,0);
 \draw[semithick] (4,0) -- (4.5,0);
 \draw[fill] (4.5,0) circle [radius=3pt];
 \draw (4.5,1) node {$y_{j_1-k-1}$};
 \draw (4.5,0.5) node {\footnotesize{$x_{j_1-1}$}};
 \draw[semithick] (4.5,0) -- (6,0);
 \draw[fill] (6,0) circle [radius=3pt];
 \draw (6,1) node {$y_{j_1-k}$};
 \draw (6,0.5) node {\footnotesize{$x_{j_1}$}};
 \draw[semithick] (6,0) -- (6.5,0);
 \draw[loosely dotted, semithick] (7,0) -- (8,0);
 \draw[semithick] (8.5,0) -- (9,0);
 \draw[fill] (9,0) circle [radius=5pt];
 \draw (9,1) node {$y_{n-k+1}$};
 \draw (9,0.5) node {\footnotesize{$x_{n+1}$}};
\end{tikzpicture}
\caption{Game positions after relabeling the original nodes $x_i$ as $y_{i-k}$, for $k\leq i\leq n+1$.}
\label{Lemma_reductionOfStrategies_smallNodes_figure}
\end{figure}
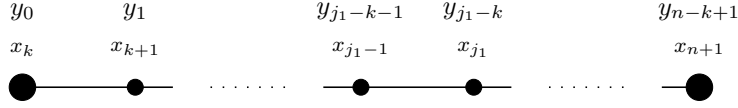

We can relabel the remaining nodes as $y_j=x_{j+k}$, for $0\leq j\leq n+1$ (see Figure \ref{Lemma_reductionOfStrategies_smallNodes_figure}) so that the original node $x_k$ is now a terminal node with associated payoff $F_0+k\, \epsilon$.
By the strong induction hypothesis, $P_{j_1-k}$ holds and therefore
the expected value of the game at $y_j=x_{j+k}$ for $0\leq j\leq j_1-k$, is $u_j=F_0+(k+j)\, \epsilon$, so $P_{j_1}$ holds true.

On the other hand, the opposite situation is also possible, i.e., that Player I does not allow Player II to choose the next move at any node $x_j$ for $1\leq j<j_1$. In this case, Player I will necessarily choose to move towards $x_{n+1}$ (since otherwise it would be more advantageous to let Player II decide next move and collect an $\epsilon$ payoff). Therefore, the players play classical random Tug-of-War in the nodes $x_i$ for $1\leq i\leq j_1-1$ and we can compute $u_{j_1}$ by solving the linear system
\begin{equation*}
\begin{cases}
u_0=F_0;\\
u_i=\frac{1}{2}(u_{i-1}+u_{i+1}),\quad\textnormal{for}\ 1\leq i\leq j_1-1;\\
u_{j_1}=u_{j_1-1}+\epsilon.
\end{cases}
\end{equation*}
Thus, we obtain $u_i=F_0+i\, \epsilon$ for $0\leq i\leq j_1$, which concludes the proof.
\end{proof}

In a similar way, the expected value of the game at every node $x_i$ with $j_2+1\leq i\leq n$
is independent of the action chosen by Player I at those game positions, as long as 
 Player II's strategy is to move towards $x_{n+1}$ at every $x_i$ with $j_2\leq i\leq n$ and 
 Player I's strategy implies letting Player II decide the next move at $x_{j_2}$. This result follows from Lemma \ref{Lemma_reductionOfStrategies_smallNodes} and is collected in the following lemma.

\begin{lemma}\label{Lemma_reductionOfStrategies_largeNodes}
Let $1\leq j_2\leq n$ and let $S_I,S_{II}$ be any pair of strategies such that Player I allows Player II to move at node $x_{j_2}$, while Player II moves towards the terminal node $x_{n+1}$ for all $x_j$ with $j_2\leq j\leq n$. Then, $u_k=F_{n+1}+(n+1-k)\,\epsilon$ for all $j_2\leq k\leq n+1$.
\end{lemma}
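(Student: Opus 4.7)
The plan is to exploit the left-right symmetry of the graph segment and reduce the statement to Lemma \ref{Lemma_reductionOfStrategies_smallNodes}. Concretely, I would relabel the nodes by reversing the segment: set $y_i := x_{n+1-i}$ for $0\leq i\leq n+1$, and consider the same Totalitarian Tug-of-War now played on the segment $y_0, y_1, \dots, y_{n+1}$, with terminal payoffs $F_0' := F_{n+1}$ at $y_0$ and $F_{n+1}' := F_0$ at $y_{n+1}$. Since the DPP \eqref{1dim_DiscreteDPP} is invariant under simultaneously permuting the roles of the two neighbors (the coin toss is fair, and the $\epsilon$-payoffs depend only on who is forced to move, not on the geometric orientation), the expected value at $y_i$ in the relabeled game coincides with the expected value at $x_{n+1-i}$ in the original game.

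Next I would translate the hypotheses. The condition that Player II moves towards $x_{n+1}$ at each $x_j$ with $j_2\leq j\leq n$ becomes the condition that Player II moves towards $y_0$ at each $y_k$ with $1\leq k\leq n+1-j_2$. The condition that Player I lets Player II decide at $x_{j_2}$ becomes the condition that Player I lets Player II decide at $y_{n+1-j_2}$. Setting $j_1' := n+1-j_2\in[1,n]$, these are exactly the hypotheses of Lemma \ref{Lemma_reductionOfStrategies_smallNodes} applied to the relabeled game.

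Applying Lemma \ref{Lemma_reductionOfStrategies_smallNodes}, I obtain that the value $u_k'$ of the relabeled game at $y_k$ satisfies
\[
u_k' = F_0' + k\,\epsilon = F_{n+1} + k\,\epsilon, \qquad 0\leq k\leq n+1-j_2.
\]
Undoing the relabeling via $u_{n+1-k} = u_k'$ and setting $i:=n+1-k$, this reads $u_i = F_{n+1} + (n+1-i)\,\epsilon$ for $j_2\leq i\leq n+1$, which is the desired formula.

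There is essentially no obstacle beyond carefully checking that the symmetry argument is legitimate; the mild point to verify is that the game rules, the coin toss, the $\epsilon$-payoffs, and the DPP are all left-right symmetric, so that the relabeled game is genuinely a Totalitarian Tug-of-War on a graph segment to which the previous lemma applies verbatim. Once this is observed, the proof is immediate.
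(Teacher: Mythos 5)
Your proposal is correct and follows essentially the same route as the paper: the paper states that Lemma \ref{Lemma_reductionOfStrategies_largeNodes} ``follows from Lemma \ref{Lemma_reductionOfStrategies_smallNodes}'' without further detail, and the intended mechanism is precisely the left--right relabeling $y_i=x_{n+1-i}$ you describe, which maps the hypotheses at $x_{j_2},\dots,x_{n+1}$ to those of Lemma \ref{Lemma_reductionOfStrategies_smallNodes} with $j_1'=n+1-j_2$ and boundary payoff $F_{n+1}$ at $y_0$. Your explicit check that the game rules and the DPP are reflection-invariant (and that the proof of Lemma \ref{Lemma_reductionOfStrategies_smallNodes} nowhere uses the ordering $F_0\leq F_{n+1}$) is exactly the point that makes this reduction legitimate.
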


With the previous considerations, the computation of the expected payoff for each pair of strategies $S_I,S_{II}$ follows using conditional expectations and Lemmas \ref{Lemma_reductionOfStrategies_smallNodes} and \ref{Lemma_reductionOfStrategies_largeNodes}. 
The expected value of the game $u_i$ at the running node $x_i$ can be written in terms of the indices $j_1$ and $j_2$ as follows
\begin{align}
u_i&=F_0+i\, \epsilon, & \textnormal{for}\ & 0\leq i\leq j_1;\label{1Dgame_MultN_valueLetX0}\\
u_i&=\frac{j_2-i}{j_2-j_1}(F_0+j_1\, \epsilon)+\frac{i-j_1}{j_2-j_1}\big(F_{n+1}+(n+1-j_2)\, \epsilon\big), & \textnormal{for}\ & j_1+1\leq i\leq j_2-1;\label{1Dgame_MultN_valueTow}\\
u_i&=F_{n+1}+(n+1-i)\, \epsilon, & \textnormal{for}\ & j_2\leq i\leq n+1,\label{1Dgame_MultN_valueLetXN}
\end{align}
where \eqref{1Dgame_MultN_valueLetX0} and \eqref{1Dgame_MultN_valueLetXN} follow from Lemmas \ref{Lemma_reductionOfStrategies_smallNodes} and \ref{Lemma_reductionOfStrategies_largeNodes} respectively.

At this point we can use the DPP, which in this case reads
\begin{equation}\label{DPP.examples.segment.n}
u_i=\max\Big\{\min\left\{u_{i-1},u_{i+1}\right\}+\epsilon,\, \frac{1}{2}\Big(\min\left\{u_{i-1},u_{i+1}\right\}+\max\left\{u_{i-1},u_{i+1}\right\}\Big)\Big\},
\end{equation}
 to determine under which conditions on $F_0, F_{n+1}$ and $\epsilon$ do expressions \eqref{1Dgame_MultN_valueLetX0}, \eqref{1Dgame_MultN_valueTow}, and \eqref{1Dgame_MultN_valueLetXN} yield the value of the game.

\begin{remark}\label{example.n.crucial.remark}
It is easy to check that formulas \eqref{1Dgame_MultN_valueLetX0} and \eqref{1Dgame_MultN_valueLetXN} satisfy the DPP \eqref{DPP.examples.segment.n} in the interior nodes of their respective ranges, i.e. for $1\leq i\leq j_1-1$ and $j_2+1\leq i\leq n$ respectively. On the other hand, for every $j_1+1\leq i\leq j_2-1$ , expression \eqref{1Dgame_MultN_valueTow} yields
\[
\begin{split}
&\max\left\{\min\left\{u_{i-1},u_{i+1}\right\}+\epsilon,\, \frac{1}{2}\Big(\min\left\{u_{i-1},u_{i+1}\right\}+\max\left\{u_{i-1},u_{i+1}\right\}\Big)\right\}\\
&
\hspace{+100pt}=
u_i
+
\max\left\{\epsilon-\frac{\left|F_{n+1}-F_0+(n+1-j_1-j_2)\epsilon\right|}{j_2-j_1},0\right\}.
\end{split}
\]
Therefore, \eqref{1Dgame_MultN_valueTow} will solve \eqref{DPP.examples.segment.n} if and only if
\[
\big|F_{n+1}-F_0+(n+1-j_1-j_2)\epsilon\big|\geq\epsilon(j_2-j_1),
\]
which can be rewritten as
\begin{equation}\label{segment.n.first.condition}
\frac{F_{n+1}-F_0}{\epsilon}\leq 2 j_1-(n+1)
\qquad\textrm{or}\qquad
\frac{F_{n+1}-F_0}{\epsilon}\geq 2 j_2-(n+1). 
\end{equation}
\end{remark}

Recall the three regions in the segment described above, i.e., (1), (2), and (3), and notice that there are seven possible game situations resulting from combinations of these. We are going to describe now how
each of these game situations will take place or not depending on 
 the relations among $F_0, F_{n+1}$ and $\epsilon$.
\begin{itemize}
 \item[(i)] Only (1) takes place. In this case $j_1=n$.
 
Observe that the fact that Player II chooses to move from $x_n$ towards $x_0$ and not towards $x_{n+1}$ implies that $F_0+n\epsilon$ (Player I's payoff at $x_n$ in the situation we are considering) is no bigger than $F_{n+1}+\epsilon$ (Player I's payoff at $x_n$ when Player II's strategy at $x_n$ is to move towards $x_{n+1}$). This yields the necessary condition 
\begin{equation}\label{example.n.condition.1.case.1}
n-1\leq \frac{F_{n+1}-F_0}{\epsilon}. 
\end{equation}

Moreover, it follows from Remark \ref{example.n.crucial.remark} that formula \eqref{1Dgame_MultN_valueLetX0} satisfies the DPP \eqref{DPP.examples.segment.n} for $1\leq i\leq j_1-1$, so it only remains to check the conditions under which \eqref{DPP.examples.segment.n} is satisfied at $x_n$.
 In fact, 
 \[
 \min\left\{u_{n-1},u_{n+1}\right\}=\min\big\{F_0+(n-1)\epsilon,F_{n+1}\big\}= F_0+(n-1)\epsilon
 \]
 by condition \eqref{example.n.condition.1.case.1} and then
 \[
\begin{split}
&\max\left\{\min\left\{u_{i-1},u_{i+1}\right\}+\epsilon,\, \frac{1}{2}\Big(\min\left\{u_{i-1},u_{i+1}\right\}+\max\left\{u_{i-1},u_{i+1}\right\}\Big)\right\}\\
&\hspace{90pt}
=
\max\left\{F_0+n\epsilon,\, \frac12\big(F_0+(n-1)\epsilon+F_{n+1}\big)\right\}\\
&\hspace{90pt}
=
u_n+\max\left\{0,\, \frac12\big(F_{n+1}-F_0-(n+1)\epsilon\big)\right\}.
\end{split}
\]

 We conclude that \eqref{1Dgame_MultN_valueLetX0} satisfies \eqref{DPP.examples.segment.n} at $x_n$ if and only if $F_{n+1}-F_0\leq(n+1)\epsilon$, which we can write together with \eqref{example.n.condition.1.case.1} as follows
\begin{equation}\label{example.n.condition.2.case.1}
n-1\leq\frac{F_{n+1}-F_0}{\epsilon}\leq n+1. 
\end{equation}

Observe that this situation corresponds to (b) in the statement of Theorem~\ref{examples.main.thm.segment}.

\medskip

 \item[(ii)] Only (2) takes place, i.e. the Players play a classical random Tug-of-War. This case corresponds to $j_1=0$ and $j_2=n+1$.
 
Notice that according to Remark \ref{example.n.crucial.remark} expression \eqref{1Dgame_MultN_valueTow} will solve \eqref{DPP.examples.segment.n} if and only if
 condition \eqref{segment.n.first.condition} holds, which in this case amounts to
\begin{equation}\label{example.n.condition.1.case.2}
\frac{F_{n+1}-F_0}{\epsilon}\geq n+1. 
 \end{equation}
Observe that when $Q>n+1$ this situation corresponds to (a) in the statement of Theorem \ref{examples.main.thm.segment} and when $Q=n+1$ to the alternative strategy with $j=0$ in (b).

\medskip

 \item[(iii)] Only (3) takes place. In this case $j_2=1$. However, this alternative is not reasonable since Player II would get a better payoff moving towards $x_0$ at $x_1$, a situation that will be covered in the next case.
 
 \medskip

 \item[(iv)] Both (1) and (3) take place but not (2), i.e., Player I always lets Player II decide the next move. This case corresponds to $j_2=j_1+1\in[2,n]$.
 
 Similarly to the reasoning in (i), observe that the fact that Player II chooses to move from $x_{j_1}$ towards $x_0$ and not towards $x_{n+1}$ implies that $F_0+j_1\epsilon$ (Player I's payoff at $x_{j_1}$ in the situation we are considering) is no bigger than $F_{n+1}+(n-j_1+1)\epsilon$ (Player I's payoff at $x_{j_1}$ when Player II's strategy at $x_{j_1}$ is to move towards $x_{n+1}$). This yields the necessary condition 
\begin{equation}\label{example.n.condition.1.case.4}
2 j_1-n-1 \leq \frac{F_{n+1}-F_0}{\epsilon}.
\end{equation}

Analogously, the fact that Player II chooses to move from $x_{j_1+1}$ towards $x_{n+1}$ and not towards $x_0$ implies that 
$F_{n+1}+(n-j_1)\epsilon$
(Player I's payoff at $x_{j_1+1}$ in the situation we are considering) is no bigger than 
$F_0+(j_1+1)\epsilon$ 
(Player I's payoff at $x_{j_1+1}$ when Player II's strategy at $x_{j_1+1}$ is to move towards $x_0$). This yields the necessary condition 
\[
\frac{F_{n+1}-F_0}{\epsilon}
\leq
2j_1-n+1,
\]
which we can write altogether with \eqref{example.n.condition.1.case.4} as
\begin{equation}\label{example.n.condition.3.case.4}
2 j_1-n-1
\leq 
\frac{F_{n+1}-F_0}{\epsilon}
\leq
2j_1-n+1.
\end{equation}

Furthermore, it follows from Remark \ref{example.n.crucial.remark} that 
\begin{equation*}
u_i=
\left\{
\begin{split}
&F_0+i\, \epsilon, \hspace{70pt} \textnormal{for}\ 1\leq i\leq j_1;\\ 
&F_{n+1}+(n+1-i)\, \epsilon, \quad \textnormal{for}\ j_1+1\leq i\leq n,
\end{split}
\right.
\end{equation*}
 satisfies the DPP \eqref{DPP.examples.segment.n} for every $ i\neq j_1,j_1+1$, so it only remains to check \eqref{DPP.examples.segment.n} in those cases. We provide the details in the case $j_1$, since the case $j_1+1$ follows similarly.
 In fact, condition \eqref{example.n.condition.3.case.4} implies
$ \min\left\{u_{j_1-1},u_{j_1+1}\right\}=u_{j_1-1}$
 and 
 $ \max\left\{u_{j_1-1},u_{j_1+1}\right\}=u_{j_1+1}$
and then
 \[
\begin{split}
&\max\left\{\min\left\{u_{j_1-1},u_{j_1+1}\right\}+\epsilon,\, \frac{1}{2}\Big(\min\left\{u_{j_1-1},u_{j_1+1}\right\}+\max\left\{u_{j_1-1},u_{j_1+1}\right\}\Big)\right\}\\
&\hspace{90pt}
=
u_{j_1}+\max\left\{0,\, \frac12\big(F_{n+1}-F_0-(2j_1-n+1)\epsilon\big)\right\}=u_{j_1},
\end{split}
\]
again by condition \eqref{example.n.condition.3.case.4}.

 Observe that this situation corresponds to (c) in the statement of Theorem \ref{examples.main.thm.segment} with $k=j_1$.

\medskip

 \item[(v)] Both (1) and (2) take place but not (3). In this case $j_1\in[1,n-1]$ and $j_2=n+1$.
 
 We can think about this situation as a classical random Tug-of-War game played in a shorter segment with nodes $x_{j_1},x_{j_1+1},\ldots, x_{n+1}$ and final payoffs $F_0+j_{1}\epsilon$ at $x_{j_1}$ and $F_{n+1}$ at $x_{n+1}$. Then, we recall from (ii) that condition \eqref{example.n.condition.1.case.2} must hold, which in this case reads,
\[
\frac{F_{n+1}-(F_0+j_{1}\epsilon)}{\epsilon}\geq n-j_{1}+1,
 \]
 which amounts to \eqref{example.n.condition.1.case.2} after simplification.

On the other hand, we can also reduce this situation to case (i) in a shorter segment with nodes $x_0,x_1,\ldots,x_{j_1},x_{j_1+1}$ and final payoffs $F_0$ at $x_{0}$ and
\[
\left(\frac{n-j_1}{n+1-j_1}\right)\big(F_0+j_1\, \epsilon\big)+\frac{F_{n+1}}{n+1-j_1}
\]
 at $x_{j_1+1}$, see formula \eqref{1Dgame_MultN_valueTow}. We conclude from condition \eqref{example.n.condition.2.case.1} that
\[
(j_1-1)\,\epsilon\leq\frac{(n-j_1)(F_0+j_1\, \epsilon)+F_{n+1}}{n+1-j_1}-F_0\leq (j_1+1)\,\epsilon,
\]
which reduces to
\[
2j_1-n-1\leq \frac{F_{n+1}-F_0}{\epsilon}\leq n+1,
\]
and
in view of \eqref{example.n.condition.1.case.2} yields
\begin{equation}\label{example.n.condition.2.case.5}
F_{n+1}=F_0+(n+1)\epsilon.
\end{equation}
Moreover, it can be easily checked that with this condition $u_i= F_0+i\, \epsilon$ for all $1\leq i\leq n$,
which verifies the DPP \eqref{DPP.examples.segment.n}.

This situation corresponds to (b) in the statement of Theorem \ref{examples.main.thm.segment}, choosing $j=j_1$.

\medskip

 \item[(vi)] Both (2) and (3) take place but not (1). In this case $j_1=0$ and $j_2\leq n$.

Arguing as in the previous cases, the fact that Player II chooses to move from $x_{j_2}$ towards $x_{n+1}$ and not towards $x_{0}$ implies that Player I's expected payoff at $x_{j_2}$ in this situation, i.e. $F_{n+1}+(n+1-j_2)\, \epsilon$
 is no bigger than Player I's expected payoff at $x_{j_2}$ when Player II's strategy at $x_{j_2}$ is to move towards $x_{0}$, that is,
\[
\epsilon+\frac{F_0}{j_2}+\frac{j_2-1}{j_2}\big(F_{n+1}+(n+1-j_2)\, \epsilon\big).
\]
 This relation yields the condition 
\begin{equation}\label{example.n.condition.1.case.6}
\frac{F_{n+1}-F_0}{\epsilon}\leq 2j_2-n-1.
\end{equation}

 On the other hand, we can think about this situation as a classical random Tug-of-War game played in a shorter segment with nodes $x_0,x_1,\ldots, x_{j_2}$ and final payoffs $F_0$ at $x_0$ and $F_{n+1}+(n+1-j_2)\, \epsilon$ at $x_{j_2}$. Then, we recall from (ii) that condition \eqref{example.n.condition.1.case.2} must hold, which in this case reads,
 \[
\frac{F_{n+1}-F_0}{\epsilon}\geq 2j_2-n-1.
 \]
 This condition, altogether with \eqref{example.n.condition.1.case.6} yields the necessary condition
\begin{equation}\label{condic.dim.1.n.cases.6.7}
F_{n+1}=F_0 + (2j_2-n-1)\,\epsilon.
\end{equation}
Then, it can be easily checked that 
\begin{equation}\label{formula.dim.1.n.cases.6.7}
u_i=
\left\{
\begin{split}
&F_0+i\, \epsilon, \hspace{70pt} \textnormal{for}\ 1\leq i\leq j_2-1;\\ 
&F_{n+1}+(n+1-i)\, \epsilon, \quad \textnormal{for}\ j_2\leq i\leq n,
\end{split}
\right.
\end{equation}
which verifies the DPP \eqref{DPP.examples.segment.n}.

Notice that this situation corresponds to (c) in the statement of Theorem \ref{examples.main.thm.segment} with $j=j_1=0$ and $k=j_2-1$.

\medskip

 \item[(vii)] All three ranges (1), (2), and (3) take place. In this case $1\leq j_1\leq j_2-2<j_2\leq n$.

We can reduce this situation to case (v) in a shorter segment with nodes $x_0,x_1,\ldots, x_{j_2}$ and final payoffs $F_0$ at $x_0$ and $F_{n+1}+(n+1-j_2)\, \epsilon$ at $x_{j_2}$.
Then, \eqref{example.n.condition.2.case.5} yields condition \eqref{condic.dim.1.n.cases.6.7}
and $u_i$ is given by \eqref{formula.dim.1.n.cases.6.7},
which verifies the DPP \eqref{DPP.examples.segment.n}.

This situation corresponds to (c) in the statement of Theorem \ref{examples.main.thm.segment}, where we have chosen $j=j_1$ and $k=j_2-1$.

\end{itemize}


This concludes the proof of Theorem \ref{examples.main.thm.segment}.

\subsection{Star-shaped graphs}\label{example.graph.Y.general}

Now we study the Totalitarian Tug-of-War game in a star-shaped graph consisting of an arbitrary number of (arbitrarily long) graph segments glued together at a common endpoint. The simplest of these is the $Y$ graph resulting from gluing together three graph segments, see Figure \ref{Ygame_MultN}. For simplicity, we will denote by $x_{*}$ the node where all branches meet and by $x_{n_i+1}$ the terminal node of each branch $i=1,2,\ldots k$.

The key observation is that the game on a star-shaped graph 
can be reduced to a game on a graph segment  where we can apply Theorem \ref{examples.main.thm.segment}. 
Furthermore, one can obtain in at most $k(k-1)/2$ steps  the game value and strategies that realize it for any given star-shaped graph as follows:

\begin{enumerate}
 \item We choose  two terminal nodes $x_{n_{I}+1}$ and $x_{n_{II}+1}$ as goals for Player I and II,   respectively.

 \item Then, Theorem \ref{examples.main.thm.segment} yields the value of the game in all the nodes on the graph segment connecting  $x_{n_{I}+1}$ and $x_{n_{II}+1}$, in particular at $x_*$

 \item Once the value of the game at $x_*$ is known, Theorem \ref{examples.main.thm.segment} can be applied to all  remaining graph segments branching off $x_*$,   taking $x_*$ as a terminal node with terminal payoff $u(x_{*})$. As a result, the value of $u$ at all remaining nodes is obtained.

 \item Then, we check if the the function $u$ thus obtained satisfies the DPP at $x_{*}$. If so, it is the unique value of the game, and we have found a strategy that realizes it via Theorem \ref{examples.main.thm.segment}. If not, we go back to step (1), select a different pair of terminal nodes and iterate until we find the value of the game.
 
\end{enumerate}

\begin{figure}
\centering
\begin{tikzpicture}[scale=.8]
	\draw[semithick] (1.5,1) -- (0,0);
 \draw[fill] (1.5,1) circle [radius=3pt];
 \draw (1.5,1.75) node {};
 \draw[semithick] (1.5,1) -- (2,1);
 \draw[loosely dotted, semithick] (2.5,1) -- (4,1);
 \draw[semithick] (4.5,1) -- (5,1);
 \draw[fill] (5,1) circle [radius=3pt];
 \draw (5,1.75) node {};
 \draw[semithick] (5,1) -- (6.5,1);
 \draw[fill] (6.5,1) circle [radius=5pt];
 \draw (6.5,1.75) node {$x_{n_3+1}$}; 
 \draw[semithick] (1.5,-1) -- (0,0);
 \draw[fill] (1.5,-1) circle [radius=3pt];
 \draw (1.5,-0.25) node {};
 \draw[semithick] (1.5,-1) -- (2,-1);
 \draw[loosely dotted, semithick] (2.5,-1) -- (3,-1);
 \draw[semithick] (3.5,-1) -- (4,-1);
 \draw[fill] (4,-1) circle [radius=3pt];
 \draw (4,-0.25) node {};
 \draw[semithick] (4,-1) -- (5.5,-1);
 \draw[fill] (5.5,-1) circle [radius=5pt];
 \draw (5.5,-0.25) node {$x_{n_{2}+1}$}; 
 \draw[fill] (0,0) circle [radius=3pt];
 \draw (0,0.75) node {$x_*$};
 \draw[semithick] (0,0) -- (-1.5,0);
 \draw[fill] (-1.5,0) circle [radius=3pt];
 \draw (-1.5,0.75) node {};
 \draw[semithick] (-1.5,0) -- (-2,0);
 \draw[loosely dotted, semithick] (-2.5,0) -- (-3,0);
 \draw[semithick] (-3.5,0) -- (-4,0);
 \draw[fill] (-4,0) circle [radius=3pt];
 \draw (-4,0.75) node {};
 \draw[semithick] (-4,0) -- (-5.5,0);
 \draw[fill] (-5.5,0) circle [radius=5pt];
 \draw (-5.5,0.75) node {$x_{n_1+1}$};
\end{tikzpicture}
\caption{Game positions of the {\it Y}-game with multiple running nodes.}
\label{Ygame_MultN}
\end{figure}
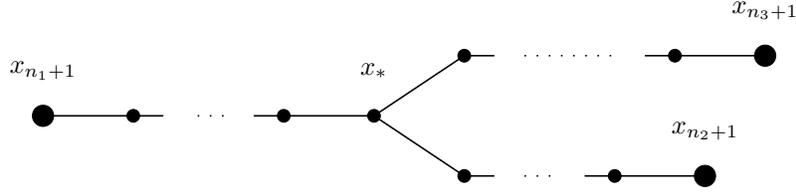


\section{The relation between the Totalitarian Tug-of-War and Jensen's extremal equations}\label{limit.eqs.jen}

In this section we are going to clarify the relation between the Totalitarian Tug-of-War and Jensen's extremal equations \eqref{eq.jensen.min.intro} and \eqref{eq.Jensen.max.intro}, see \cite{jensen1993}.

More precisely, in this section we are going to consider a Totalitarian Tug-of-War played in $\Omega\subset \mathbb{R}^n$, where the players can move the game token from $y\in\Omega$ to any position $x\in\overline{B}_\epsilon(y)$, with $\epsilon>0$ a parameter fixed at the beginning of the game.  In this case, the payment that Player I receives from
Player II when the latter is forced to choose the new game position is proportional to the 
step size $\epsilon$, with  proportionality constant $\lambda$.
Without loss of generality, we will consider $\lambda=1$ in the sequel.

As before, considering the two possible choices for Player I and then applying conditional probabilities for the coin toss we obtain the  Dynamic Programming Principle associated to this game, i.e.,
\begin{equation*}
u_\epsilon(x)=\max\left\{\inf_{y\in\overline{B}_\epsilon(x)\cap\overline{\Omega}}u_\epsilon(y)+\epsilon,\, \frac{1}{2}\left(\sup_{y\in\overline{B}_\epsilon(x)\cap\overline{\Omega}}u_\epsilon(y)+\inf_{y\in\overline{B}_\epsilon(x)\cap\overline{\Omega}}u_\epsilon(y)\right)\right\}
\end{equation*}
for all $x\in\Omega$. This can be equivalently written as
\begin{equation}\label{DiscreteDPP}
\begin{split}
\min\bigg\{u_\epsilon(x)-&\inf_{y\in\overline{B}_\epsilon(x)\cap\overline{\Omega}}u_\epsilon(y)-\epsilon,
\\
& u_\epsilon(x)-\frac{1}{2}\bigg(\sup_{y\in\overline{B}_\epsilon(x)\cap\overline{\Omega}}u_\epsilon(y)+\inf_{y\in\overline{B}_\epsilon(x)\cap\overline{\Omega}}u_\epsilon(y)\bigg)\bigg\}=0
\end{split}
\end{equation}
for all $x\in\Omega$. For simplicity, we  denote $\mathcal{G}[u_\epsilon](x)=0$ with
\begin{equation}\label{DiscreteDPP_operator}
\begin{split}
\mathcal{G}[u_\epsilon](x)=\min\bigg\{u_\epsilon(x)-&\inf_{y\in\overline{B}_\epsilon(x)\cap\overline{\Omega}}u_\epsilon(y)-\epsilon,
\\
& u_\epsilon(x)-\frac{1}{2}\bigg(\sup_{y\in\overline{B}_\epsilon(x)\cap\overline{\Omega}}u_\epsilon(y)+\inf_{y\in\overline{B}_\epsilon(x)\cap\overline{\Omega}}u_\epsilon(y)\bigg)\bigg\}.
\end{split}
\end{equation}

The Dirichlet problem that results from the combination of (\ref{DiscreteDPP}) and the terminal boundary payoff given by  a bounded function  $F:\partial\Omega\to\mathbb{R}$, corresponds~to
\begin{equation}\label{(30)[Rossi]}
\begin{cases}
\mathcal{G}[u_\epsilon](x)=0, & x\in\Omega;\\
u_\epsilon(x)=F(x), & x\in\partial\Omega.
\end{cases}
\end{equation}

Then we have the following result.

\begin{proposition} \label{prop.limit.eqn}
Let $u_\epsilon$ be the solution of  (\ref{(30)[Rossi]}). Assume that  there exists a function $u$ such that $u_\epsilon\to u$ uniformly in $\overline{\Omega}$ as $\epsilon\to0$. Then, $u$ is a viscosity solution to 
\begin{equation}\label{(33)[Rossi].limit.only}
\begin{cases}
\min\left\{|\nabla u|-1,-\Delta_\infty^N u\right\}=0, &  \textrm{in} \ \Omega;\\[0.25em]
u=F, & \textrm{on} \ \partial\Omega.
\end{cases}
\end{equation}
\end{proposition}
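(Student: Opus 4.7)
The plan is to execute the standard viscosity passage-to-the-limit argument in the Dynamic Programming Principle $\mathcal{G}[u_\epsilon](x)=0$, adapted to the fact that $\mathcal{G}$ is a minimum of two operators rather than a single mean-value operator. The boundary condition $u=F$ on $\partial\Omega$ is immediate from the hypothesis $u_\epsilon\to u$ uniformly in $\overline{\Omega}$ together with $u_\epsilon|_{\partial\Omega}=F$. The interior PDE analysis will rest on two second-order Taylor expansions valid for any $\varphi\in C^2$ with $\nabla\varphi(x)\neq 0$:
\[
\varphi(x)-\inf_{\overline{B}_\epsilon(x)}\varphi=\epsilon|\nabla\varphi(x)|-\tfrac{\epsilon^2}{2}\Delta_\infty^N\varphi(x)+o(\epsilon^2),
\]
\[
\varphi(x)-\tfrac{1}{2}\Big(\sup_{\overline{B}_\epsilon(x)}\varphi+\inf_{\overline{B}_\epsilon(x)}\varphi\Big)=-\tfrac{\epsilon^2}{2}\Delta_\infty^N\varphi(x)+o(\epsilon^2),
\]
applied at touching points produced by the uniform convergence of $u_\epsilon$ to $u$.

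For the supersolution inequality, let $\varphi\in C^2$ touch $u$ strictly from below at an interior point $x_0\in\Omega$. By uniform convergence, $u_\epsilon-\varphi$ attains a local minimum at some $x_\epsilon\to x_0$, and taking sup/inf over $\overline{B}_\epsilon(x_\epsilon)$ of the resulting extremum inequality gives
\[
u_\epsilon(x_\epsilon)-\inf u_\epsilon\leq \varphi(x_\epsilon)-\inf\varphi,\qquad u_\epsilon(x_\epsilon)-\tfrac{1}{2}(\sup u_\epsilon+\inf u_\epsilon)\leq \varphi(x_\epsilon)-\tfrac{1}{2}(\sup\varphi+\inf\varphi).
\]
Since $\mathcal{G}[u_\epsilon](x_\epsilon)=0$ forces both entries of the min defining $\mathcal{G}$ to be nonnegative, the corresponding expressions for $\varphi$ are also nonnegative. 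Dividing by $\epsilon$ and $\epsilon^2/2$ respectively, substituting the expansions above, and letting $\epsilon\to 0$, I obtain $|\nabla\varphi(x_0)|\geq 1$ and $-\Delta_\infty^N\varphi(x_0)\geq 0$; the first of these retroactively justifies the use of the expansions, since it excludes $\nabla\varphi(x_0)=0$.

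For the subsolution inequality I will argue by contradiction. If $\nabla\varphi(x_0)=0$ there is nothing to prove, since then $\min\{|\nabla\varphi(x_0)|-1,-\Delta_\infty^N\varphi(x_0)\}\leq -1<0$, so assume $|\nabla\varphi(x_0)|>1$ and $-\Delta_\infty^N\varphi(x_0)>0$ for a $\varphi$ that touches $u$ strictly from above at an interior $x_0\in\Omega$. Picking $x_\epsilon\to x_0$ where $u_\epsilon-\varphi$ attains a local maximum, the inequalities of the previous paragraph reverse and yield lower bounds on both entries of $\mathcal{G}[u_\epsilon](x_\epsilon)$ in terms of the $\varphi$-expressions. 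By continuity of $\nabla\varphi$ and $\Delta_\infty^N\varphi$ near $x_0$ together with the expansions, both lower bounds are strictly positive for sufficiently small $\epsilon$, so the min defining $\mathcal{G}[u_\epsilon](x_\epsilon)$ is strictly positive, contradicting $\mathcal{G}[u_\epsilon](x_\epsilon)=0$.

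The main technical subtlety is the degeneracy of the first-order expansion at critical points of $\varphi$, but as sketched this never causes trouble: for subsolutions the inequality is automatic when $\nabla\varphi(x_0)=0$, and for supersolutions this case is self-excluded because the DPP forces $|\nabla\varphi(x_0)|\geq 1$ in the limit, so no appeal to the semicontinuous envelopes of the operator is needed. A minor check is that, since $x_0\in\Omega$ is interior and $x_\epsilon\to x_0$, for $\epsilon$ small the intersection $\overline{B}_\epsilon(x_\epsilon)\cap\overline{\Omega}$ appearing in \eqref{DiscreteDPP} is the full ball $\overline{B}_\epsilon(x_\epsilon)$, so the Taylor expansions apply verbatim.
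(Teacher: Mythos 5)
Your route is essentially the paper's: pass to the limit in the DPP (\ref{DiscreteDPP}) at points where a $C^2$ test function nearly touches $u_\epsilon$, and use first- and second-order expansions of the $\sup$/$\inf$ over $\overline{B}_\epsilon$ (your two displayed expansions encode exactly what the paper derives via the location of the extremal points $x_\epsilon^{\min},x_\epsilon^{\max}$ and the symmetric-point trick; your contradiction formulation of the subsolution case is a harmless repackaging of the paper's direct argument). The one genuine gap is the sentence ``by uniform convergence, $u_\epsilon-\varphi$ attains a local minimum (maximum) at some $x_\epsilon\to x_0$.'' The solutions $u_\epsilon$ of (\ref{(30)[Rossi]}) are in general not continuous (not even semicontinuous), so $u_\epsilon-\varphi$ need not attain its extremum on a compact neighborhood, and uniform convergence of $u_\epsilon$ to $u$ does not repair this; it only gives near-attainment near $x_0$. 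This is precisely why the paper proves Lemma \ref{Lemma4.5[Lindqvist]}: one obtains points $x_\epsilon\to\hat x$ at which the extremum is attained up to an error $\eta_\epsilon$, and choosing $\eta_\epsilon=\epsilon^3=o(\epsilon^2)$ the error is negligible after dividing by $\epsilon$ or $\epsilon^2$. Your argument goes through verbatim once exact extrema are replaced by such $\epsilon^3$-approximate extrema, so what is missing is a needed (if standard) lemma rather than a wrong idea.

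A smaller point of rigor: in the supersolution step you obtain $|\nabla\varphi(x_0)|\ge 1$ by ``substituting the expansions'' and then claim this retroactively justifies them; as written this is circular, because your second-order expansion is only valid where $\nabla\varphi\neq0$ (indeed $\Delta_\infty^N\varphi$ is not given by the formula at critical points). The clean fix is the paper's: derive the gradient bound from the first-order estimate alone, $\varphi(x_\epsilon)-\min_{\overline{B}_\epsilon(x_\epsilon)}\varphi\le \epsilon\,|\nabla\varphi(x_\epsilon)|+O(\epsilon^2)$, which holds for any $C^2$ function whether or not the gradient vanishes, combined with the lower bound $\ge\epsilon-\epsilon^3$ coming from the first entry of the DPP. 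Once $|\nabla\varphi(x_0)|\ge1$ is known, the gradient is bounded away from zero near $x_0$, the second-order (asymptotic mean value) expansion is legitimate and uniform along $x_\epsilon\to x_0$, and the second entry of the DPP yields $-\Delta_\infty^N\varphi(x_0)\ge0$. With these two repairs your proof matches the paper's.
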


\begin{remark}
Considering a Totalitarian Tug-of-War which favors Player II instead of Player I, we can treat \eqref{eq.Jensen.max.intro} in a similar way, with equation \eqref{DiscreteDPP}  replaced by
\[
\begin{split}
\max\bigg\{
&
u_\epsilon(x)-\sup_{y\in\overline{B}_\epsilon(x)\cap\overline{\Omega}}u_\epsilon(y)+\lambda\,\epsilon ,\\
&\hspace{50pt}
u_\epsilon(x)-\frac{1}{2}\bigg(\sup_{y\in\overline{B}_\epsilon(x)\cap\overline{\Omega}}u_\epsilon(y)+\inf_{y\in\overline{B}_\epsilon(x)\cap\overline{\Omega}}u_\epsilon(y)\bigg)\bigg\}
=0.
\end{split}
\]
\end{remark}

\begin{remark}
We would like to   point out  an interesting connection with the numerical analysis of equations  \eqref{eq.jensen.min.intro} and \eqref{eq.Jensen.max.intro}.
More precisely, equations \eqref{eq.jensen.min.intro} and \eqref{eq.Jensen.max.intro} can be respectively approximated by the following schemes
\begin{equation}\label{disc.scheme.lower}
\begin{split}
\min\Bigg\{
\frac{1}{\epsilon}&\left(
u(x)-\inf_{y\in\overline{B}_\epsilon(x)\cap\overline{\Omega}}u(y)-\epsilon \lambda
\right),\\
&\hspace{50pt}\frac{1}{\epsilon^2}\left(2 u(x)-\sup_{y\in\overline{B}_\epsilon(x)\cap\overline{\Omega}}u(y)-\inf_{y\in\overline{B}_\epsilon(x)\cap\overline{\Omega}}u(y)
\right)\Bigg\}=0
\end{split}
\end{equation}
and
\begin{equation}\label{disc.scheme.upper}
\begin{split}
\max\Bigg\{
\frac{1}{\epsilon}&\left(
u(x)-\sup_{y\in\overline{B}_\epsilon(x)\cap\overline{\Omega}}u(y)+\epsilon \lambda
\right),\\
&\hspace{50pt}\frac{1}{\epsilon^2}\left(2 u(x)-\sup_{y\in\overline{B}_\epsilon(x)\cap\overline{\Omega}}u(y)-\inf_{y\in\overline{B}_\epsilon(x)\cap\overline{\Omega}}u(y)
\right)\Bigg\}=0,
\end{split}
\end{equation}
which are discrete elliptic in the sense of \cite{Oberman2006} (and, therefore, monotone in the sense of \cite{barles1991}) 
Furthermore, in a similar way to the Taylor expansion arguments in the proof of Proposition \ref{prop.limit.eqn}, one can  show that schemes \eqref{disc.scheme.lower} and \eqref{disc.scheme.upper} are   consistent (see \cite[Section 2]{barles1991} for the  definition). This means, roughly speaking, that the finite-difference operator converges in the viscosity sense towards the continuous operator of the PDE  as $\epsilon\to0$. 
Monotonicity and consistency, altogether with stability are important requirements for convergence, as established in the seminal paper \cite{barles1991}. Informally, the authors in \cite{barles1991} proved that any monotone, stable, and consistent scheme converges  provided that the limiting equation satisfies a type of comparison principle known as ``strong uniqueness property", which is usually difficult to prove. 
It seems an interesting question to tackle the convergence of schemes \eqref{disc.scheme.lower} and \eqref{disc.scheme.upper} and their numerical implementation but  we will not discuss it here.
\end{remark}

We include next for the reader's convenience the definition of viscosity solution for equation (\ref{(33)[Rossi].limit.only}).

\begin{definition}\label{viscosity_sltn}
A \emph{viscosity subsolution} of the equation \eqref{(30)[Rossi]} in $\Omega$ is an upper semicontinuous function $u:\Omega\to\mathbb{R}$ such that
\begin{equation}\label{(1.20)[Ju]}
\min\left\{|\nabla\varphi(\hat{x})|-1,-\Delta_\infty^N\varphi(\hat{x})\right\}\leq0,
\end{equation}
whenever $\hat{x}\in\Omega$ and $\varphi\in\mathcal{C}^2(\Omega)$ are such that $u(\hat{x})=\varphi(\hat{x})$ and $u(x)\leq\varphi(x)$, for all $x$ in a neighborhood of $\hat{x}$ (in other words, $\varphi$ touches $u$ at $\hat{x}$ from above in a neighborhood of $\hat{x}$, or equivalently, $u-\varphi$ has a local maximum at $\hat{x}$).

Similarly, a \emph{viscosity supersolution} of \eqref{(30)[Rossi]} in $\Omega$ is a lower semicontinuous function $u:\Omega\to\mathbb{R}$ such that
\begin{equation}\label{(1.21)[Ju]}
\min\left\{|\nabla\phi(\hat{x})|-1,-\Delta_\infty^N\phi(\hat{x})\right\}\geq0,
\end{equation}
whenever $\hat{x}\in\Omega$ and $\phi\in\mathcal{C}^2(\Omega)$ are such that $u(\hat{x})=\phi(\hat{x})$ and $u(x)\geq\phi(x)$, for all $x$ in a neighborhood of $\hat{x}$ (in other words, $\phi$ touches $u$ at $\hat{x}$ from below in a neighborhood of $\hat{x}$, or equivalently, $u-\phi$ has a local minimum at $\hat{x}$).

Finally, a function $u:\Omega\to\mathbb{R}$ is a \emph{viscosity solution} of \eqref{(30)[Rossi]} in $\Omega$ if it is both a viscosity subsolution and viscosity supersolution.
\end{definition}

The following lemma, which relies on uniform convergence, is needed in the proof of Proposition  \ref{prop.limit.eqn}. A proof for continuous functions
can be found 
 in \cite[Lemma 4.5]{Lindqvist2014}, but that version does not apply in our case because $u_\epsilon$ are not continuous in general.

\begin{lemma}\label{Lemma4.5[Lindqvist]}
Let $u=\lim_{\epsilon\to0}u_\epsilon$ uniformly in $\overline{\Omega}\subset\mathbb{R}^n$, $\hat{x}\in\Omega$ and $\varphi\in\mathcal{C}^2(\Omega)$ such that $u(\hat{x})=\varphi(\hat{x})$ and $u(x)<\varphi(x)$ for all $x$ in a neighborhood $\mathcal{U}$ of $\hat{x}$, when $x\neq\hat{x}$ (in other words, $\varphi$ touches $u$ at $\hat{x}$ strictly from above in $\mathcal{U}$, or equivalently, $u-\varphi$ has a strict maximum at $\hat{x}$ in $\mathcal{U}$). Then, for any given $\eta_\epsilon>0$ there exists a sequence of points $\{x_\epsilon\}_\epsilon\subset\mathcal{U}$ satisfying $\hat{x}=\lim_{\epsilon\to0}x_\epsilon$ such that
\begin{equation}\label{(14)[Rossi]_subsltn}
u_\epsilon(x)-\varphi(x)\leq u_\epsilon(x_\epsilon)-\varphi(x_\epsilon)+\eta_\epsilon\quad\textnormal{for all}\ \ x\in\mathcal{U}.
\end{equation}

Moreover, whenever $\phi\in\mathcal{C}^2(\Omega)$ touches $u$ at $\hat{x}$ strictly from below in $\mathcal{U}$, for any given $\eta_\epsilon>0$ there is a sequence of points $\{x_\epsilon\}_\epsilon\subset\mathcal{U}$ satisfying $\hat{x}=\lim_{\epsilon\to0}x_\epsilon$, such that
\begin{equation}\label{(14)[Rossi]_supersltn}
u_\epsilon(x)-\phi(x)\geq u_\epsilon(x_\epsilon)-\phi(x_\epsilon)-\eta_\epsilon\quad\textnormal{for all}\ \ x\in\mathcal{U}.
\end{equation}
\end{lemma}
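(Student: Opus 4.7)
The plan is to pick $x_\epsilon$ as an approximate maximizer of $u_\epsilon-\varphi$ on $\mathcal{U}$ with slack $\eta_\epsilon$; the inequality \eqref{(14)[Rossi]_subsltn} will then be a tautology from the definition of an approximate supremum, so the real work is to show that such $x_\epsilon$ must converge to $\hat x$. The novelty compared to \cite[Lemma~4.5]{Lindqvist2014} is that the $u_\epsilon$ may fail to attain their sup on a compact set because they are only bounded, not continuous; this is precisely what the slack $\eta_\epsilon$ is designed to absorb.

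\textbf{Step 1 (Construction of $x_\epsilon$).} Shrinking $\mathcal{U}$ if necessary, I may assume $\overline{\mathcal{U}}$ is compact in $\Omega$. Since $\{u_\epsilon\}$ is uniformly bounded on $\overline{\Omega}$ and $\varphi\in C^2$, the quantity
\[
M_\epsilon:=\sup_{x\in\mathcal{U}}\bigl(u_\epsilon(x)-\varphi(x)\bigr)
\]
is finite, and by definition of supremum I can select $x_\epsilon\in\mathcal{U}$ with $u_\epsilon(x_\epsilon)-\varphi(x_\epsilon)\ge M_\epsilon-\eta_\epsilon$. Then for every $x\in\mathcal{U}$,
\[
u_\epsilon(x)-\varphi(x)\le M_\epsilon\le u_\epsilon(x_\epsilon)-\varphi(x_\epsilon)+\eta_\epsilon,
\]
which is \eqref{(14)[Rossi]_subsltn}.

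\textbf{Step 2 (Showing $M_\epsilon\to 0$).} Since $u\le\varphi$ on $\mathcal{U}$ with equality at $\hat x$, one has $\sup_{\mathcal{U}}(u-\varphi)=0$. Uniform convergence $u_\epsilon\to u$ on $\overline\Omega$ then gives
\[
M_\epsilon\le \sup_{\mathcal{U}}(u-\varphi)+\|u_\epsilon-u\|_{L^\infty(\overline\Omega)}=\|u_\epsilon-u\|_{L^\infty(\overline\Omega)}\longrightarrow 0,
\]
while $M_\epsilon\ge u_\epsilon(\hat x)-\varphi(\hat x)\to u(\hat x)-\varphi(\hat x)=0$. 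Hence $M_\epsilon\to 0$.

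\textbf{Step 3 (Showing $x_\epsilon\to\hat x$).} By compactness of $\overline{\mathcal{U}}$ it suffices to show that every convergent subsequence $x_{\epsilon_k}\to x^*$ satisfies $x^*=\hat x$. Combining Steps~1 and~2 with $\eta_\epsilon\to 0$ yields $u_{\epsilon_k}(x_{\epsilon_k})-\varphi(x_{\epsilon_k})\to 0$, and uniform convergence then gives $u(x_{\epsilon_k})-\varphi(x_{\epsilon_k})\to 0$; continuity of $\varphi$ upgrades this to $u(x_{\epsilon_k})\to\varphi(x^*)$. If $x^*\neq \hat x$ then by hypothesis $u(x^*)<\varphi(x^*)$, and invoking upper semicontinuity of $u$ (inherited as a uniform limit of the $u_\epsilon$, whose semicontinuity properties come from monotonicity of the $\sup$ and $\inf$ operators in the DPP \eqref{DiscreteDPP}) one obtains $\limsup_k u(x_{\epsilon_k})\le u(x^*)<\varphi(x^*)$, contradicting $u(x_{\epsilon_k})\to\varphi(x^*)$. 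Hence $x^*=\hat x$.

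\textbf{Supersolution case.} The inequality \eqref{(14)[Rossi]_supersltn} follows by the mirror argument: replace $\sup$ by $\inf$, use $u\ge\phi$ on $\mathcal{U}$ with equality at $\hat x$, pick $x_\epsilon$ approximately realizing $\inf_{\mathcal{U}}(u_\epsilon-\phi)$ within $\eta_\epsilon$, and close the contradiction using lower semicontinuity of $u$.

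\textbf{Main obstacle.} Step~3 is the delicate point. The standard argument in the continuous setting extracts an exact maximizer on a compact ball and passes to the limit using continuity of $u_\epsilon-\varphi$; here this is unavailable, and the approximate-maximizer device avoids it. However, closing the contradiction in Step~3 still requires some semicontinuity of the limit $u$, a property which is not automatic from the DPP (the $\sup$ appearing in \eqref{DiscreteDPP} is lower semicontinuous in $x$ while the $\inf$ is upper semicontinuous, so neither USC nor LSC is granted to $u_\epsilon$ by inspection). Verifying this semicontinuity (or, alternatively, rephrasing the whole argument in terms of the upper/lower semicontinuous envelopes $u^*$ and $u_*$, which is standard in viscosity theory) is the genuinely subtle ingredient of the proof.
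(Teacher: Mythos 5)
Your Steps 1--2 are fine and share the paper's basic device (an $\eta_\epsilon$-approximate maximizer plus uniform convergence), but Step 3 --- the only nontrivial point of the lemma --- is not actually closed. As you yourself concede, the contradiction there invokes upper semicontinuity of $u$ (lower semicontinuity in the mirror case), which is neither a hypothesis of the lemma nor something you derive; the suggestion that semicontinuity of $u_\epsilon$ might be extracted from the DPP, or that one could ``rephrase via the envelopes $u^*,u_*$'', is left unexecuted, and the envelope route would in any case prove a different statement. A proof whose decisive step is flagged as an unverified ``genuinely subtle ingredient'' is incomplete. There is also a smaller mismatch: your convergence argument needs $\eta_\epsilon\to0$ in order to pass from $M_{\epsilon_k}\to0$ to $u_{\epsilon_k}(x_{\epsilon_k})-\varphi(x_{\epsilon_k})\to0$, whereas the lemma allows an arbitrary $\eta_\epsilon>0$; this is repairable (run the argument with $\min\{\eta_\epsilon,\epsilon\}$, which only strengthens the conclusion), but it should be said.

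The paper's proof shows the semicontinuity detour is unnecessary: it localizes \emph{before} selecting the approximate maximizer. Fix $r>0$. The strict maximum of $u-\varphi$ at $\hat x$ is used in the form $\sup_{\mathcal{U}\setminus B_r(\hat x)}(u-\varphi)<0$, and uniform convergence gives
$\sup_{\mathcal{U}\setminus B_r(\hat x)}(u_\epsilon-\varphi)\le\tfrac12\sup_{\mathcal{U}\setminus B_r(\hat x)}(u-\varphi)<0$
for small $\epsilon$, while $u_\epsilon(\hat x)-\varphi(\hat x)\to0$. Hence for $\epsilon<\epsilon_r$ the value of $u_\epsilon-\varphi$ at the center $\hat x$ strictly dominates its supremum over $\mathcal{U}\setminus B_r(\hat x)$, so $\sup_{\mathcal{U}}(u_\epsilon-\varphi)=\sup_{B_r(\hat x)}(u_\epsilon-\varphi)$ and an $\eta_\epsilon$-approximate maximizer can be chosen \emph{inside} $B_r(\hat x)$, for any $\eta_\epsilon>0$; diagonalizing over $r=1,\tfrac12,\tfrac13,\dots$ produces $x_\epsilon\to\hat x$. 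No semicontinuity of $u_\epsilon$ or of $u$ enters beyond this quantified form of the strict-maximum hypothesis. Note that the same ingredient would close your Step 3 directly: if a subsequence stayed at distance $\ge r$ from $\hat x$, then $u(x_{\epsilon_k})-\varphi(x_{\epsilon_k})\le\sup_{\mathcal{U}\setminus B_r(\hat x)}(u-\varphi)<0$, contradicting convergence to $0$ --- no appeal to USC of $u$ at the limit point is needed. As submitted, however, your argument leaves its key step contingent on an unestablished regularity property of $u$, so it has a genuine gap.
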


\begin{proof}
Let  $B_r(\hat{x})$ be a fixed, small ball and write
 $u_\epsilon-\varphi=(u_\epsilon-u)+(u-\varphi)$. Since $u_\epsilon\to u$,
we have that
\[
\begin{split}
\sup_{\mathcal{U}\backslash B_r(\hat{x})}(u_\epsilon-\varphi) &
\leq \sup_{\mathcal{U}\backslash B_r(\hat{x})}(u_\epsilon-u)+\sup_{\mathcal{U}\backslash B_r(\hat{x})}(u-\varphi)\\
 &\leq \frac12\sup_{\mathcal{U}\backslash B_r(\hat{x})}(u-\varphi)<0,
 \end{split}
\]
 for $\epsilon$ small enough.  Moreover, for  $\epsilon<\epsilon_r$ small enough
\begin{equation*}
\sup_{\mathcal{U}\backslash B_r(\hat{x})}(u_\epsilon-\varphi)<u_\epsilon(\hat{x})-\varphi(\hat{x})
\end{equation*}
 because the right-hand side approaches zero as $\epsilon\to0$ 
while the left-hand side has a strictly negative limit. 
Notice that this means the value of $u_\epsilon-\varphi$ at the center is larger than the supremum over $\mathcal{U}\backslash B_r(\hat{x})$.
Therefore, there is a point $x_\epsilon\in B_r(\hat{x})$ such that
\begin{equation*}
\sup_{\mathcal{U}}(u_\epsilon-\varphi)\leq u_\epsilon(x_\epsilon)-\varphi(x_\epsilon)+\eta_\epsilon
\end{equation*}
when $\epsilon<\epsilon_r$.
The proof finishes by letting $r\to0$ via a sequence, say $r=1,\frac{1}{2},\frac{1}{3},\dots$ Hence, there is a sequence of points $x_\epsilon\to\hat{x}$ which satisfy (\ref{(14)[Rossi]_subsltn}), as desired. The proof of the supersolution case follows similarly.
\end{proof}

We are now in position to prove Proposition  \ref{prop.limit.eqn}.

\begin{proof}[Proof of Proposition  \ref{prop.limit.eqn}] 
\noindent1.\quad{}We will show first that $u$ is a viscosity supersolution  of \eqref{(30)[Rossi]}. Let $\hat{x}\in\Omega$ and  $\phi\in\mathcal{C}^2(\Omega)$ such that $\phi$ touches $u$ at $\hat{x}$ strictly from below in a neighborhood of $\hat{x}$. Our goal is to prove that
\begin{equation}\label{goal.limitPDE.supersol} 
\min\left\{|\nabla \phi(\hat{x})|-1,-\Delta_\infty^N \phi(\hat{x})\right\}\geq0. 
\end{equation}

Lemma \ref{Lemma4.5[Lindqvist]} holds with $\eta_\epsilon=\epsilon^3$ and we know there exists a sequence of points $x_\epsilon\to \hat{x}$ such that \eqref{(14)[Rossi]_supersltn} holds. Rearranging terms we  deduce 
\begin{equation}\label{eqn.with.sup1}
\phi(x_\epsilon)-\max_{\overline{B}_\epsilon(x_\epsilon)}\phi+\epsilon^3\geq u_\epsilon(x_\epsilon)-\sup_{\overline{B}_\epsilon(x_\epsilon)}u_\epsilon
\end{equation}
and
\begin{equation}\label{eqn.with.inf1}
\phi(x_\epsilon)
-\min_{\overline{B}_\epsilon(x_\epsilon)}\phi+\epsilon^3\geq u_\epsilon(x_\epsilon)-\inf_{\overline{B}_\epsilon(x_\epsilon)}u_\epsilon.
\end{equation}

\medskip

\noindent2.\quad{} 
We deduce from (\ref{DiscreteDPP}), that
\[
u_\epsilon(x)-\inf_{\overline{B}_\epsilon(x)}u_\epsilon-\epsilon\geq0\qquad\textnormal{and}\qquad u_\epsilon(x)-\frac{1}{2}\left(\sup_{\overline{B}_\epsilon(x)}u_\epsilon+\inf_{\overline{B}_\epsilon(x)}u_\epsilon\right)\geq0
\]
with equality in at least one of the two equations. From the first one and \eqref{eqn.with.inf1}, we get
\begin{equation}\label{eqn.first.contradiciton.supersol}
\phi(x_\epsilon)-\min_{\overline{B}_\epsilon(x_\epsilon)}\phi
+
\epsilon^3\geq u_\epsilon(x_\epsilon)-\inf_{\overline{B}_\epsilon(x_\epsilon)}u_\epsilon
\geq
\epsilon.
\end{equation}

Let us show that $|\nabla\phi(\hat x)|\geq1$. Assume to the contrary that $|\nabla\phi(\hat x)|<1$ and  write $\min_{\overline{B}_\epsilon(x_\epsilon)}\phi=\phi(x_\epsilon-\epsilon v_\epsilon)$ for some $v_\epsilon\in \overline{B}_1(0)$. Then, a first-order Taylor expansion yields
\begin{equation}\label{eqn.first.contradiciton.supersol.2}
\frac{1}{\epsilon}\bigg(
\phi(x_\epsilon)-\min_{\overline{B}_\epsilon(x_\epsilon)}\phi
\bigg)
=
 \nabla \phi(x_\epsilon)\cdot v_\epsilon+o(1)\leq|\nabla \phi(x_\epsilon)| +o(1) <1
\end{equation}
for $\epsilon$ small enough,
a contradiction with \eqref{eqn.first.contradiciton.supersol}.

\medskip

\noindent3.\quad{}Let  $x_\epsilon^{\min},x_\epsilon^{\max}\in \overline{B}_\epsilon(x_\epsilon)$ be  such that
 $\phi(x_\epsilon^{\min})=\min_{\overline{B}_\epsilon(x_\epsilon)}\phi$ 
and 
$\phi(x_\epsilon^{\max})=\max_{\overline{B}_\epsilon(x_\epsilon)}\phi$, respectively. We can show that
\begin{equation}\label{minimum_point}
x_\epsilon^{\min}=x_\epsilon-\epsilon\left[\frac{\nabla\phi(x_\epsilon)}{|\nabla\phi(x_\epsilon)|}+o(1)\right]\quad\textnormal{as}\ \ \epsilon\to0,
\end{equation}
and
\begin{equation}\label{maximum_point}
x_\epsilon^{\max}=x_\epsilon+\epsilon\left[\frac{\nabla\phi(x_\epsilon)}{|\nabla\phi(x_\epsilon)|}+o(1)\right]\quad\textnormal{as}\ \ \epsilon\to0.
\end{equation}
We will provide the details of the proof of \eqref{minimum_point} because \eqref{maximum_point} follows similarly.

First, notice that  $x_\epsilon^{\min}\in\partial B_\epsilon(x_\epsilon)$. To see this, assume to the contrary that there exists a subsequence $x_{\epsilon_k}^{\min}\in B_{\epsilon_k}(x_{\epsilon_k})$ of minimum points of $\phi$ in $\Omega$. Then $\nabla\phi(x_{\epsilon_k}^{\min})=0$ and since $\lim_{\epsilon_k\to0}x_{\epsilon_k}^{\min}=\hat{x}$, the continuity of $\phi$ implies that $\nabla\phi(\hat{x})=0$, a contradiction.

Then, we can write $x_\epsilon^{\min}=x_\epsilon-\epsilon\, v$ for $|v|=1$, and let $\omega$ be any fixed direction with $|\omega|=1$. Since $\phi(x_\epsilon^{\min})\leq\phi(x_\epsilon-\epsilon\, \omega)$, a Taylor expansion of $\phi$ around $x_\epsilon$ gives 
\begin{equation*}
\phi(x_\epsilon)-\epsilon\left<\nabla\phi(x_\epsilon), v\right>+o(\epsilon)=\phi(x_\epsilon^{\min})\leq\phi(x_\epsilon-\epsilon\, \omega)\quad\textnormal{as}\ \ \epsilon\to0,
\end{equation*}
or equivalently,
\begin{equation*}
\left<\nabla\phi(x_\epsilon),\, v\right>+o(1)\geq\frac{-\phi(x_\epsilon-\epsilon\, \omega)+\phi(x_\epsilon)}{\epsilon}=\left<\nabla\phi(x_\epsilon),\, \omega\right>+o(1)\quad\textnormal{as}\ \ \epsilon\to0.
\end{equation*}
Since the previous argument holds for any direction $\omega$, we can conclude that
\begin{equation*}
v=\frac{\nabla\phi(x_\epsilon)}{|\nabla\phi(x_\epsilon)|}+o(1)\quad\textnormal{as}\ \ \epsilon\to0.
\end{equation*}
and the proof of   \eqref{minimum_point} is complete.

%

\medskip

\noindent4.\quad{}Now, assume 
\[
u_\epsilon(x)-\frac{1}{2}\left(\sup_{\overline{B}_\epsilon(x)}u_\epsilon+\inf_{\overline{B}_\epsilon(x)}u_\epsilon\right)\geq0. 
\]
Adding \eqref{eqn.with.sup1} and \eqref{eqn.with.inf1}, and using this last inequality yields
\begin{equation}\label{second.part.holds.supersol}
\begin{split}
\phi(x_\epsilon)&-\frac{1}{2}\left(\max_{\overline{B}_\epsilon(x_\epsilon)}\phi+\min_{\overline{B}_\epsilon(x_\epsilon)}\phi\right)+\epsilon^3
\\
&\geq u_\epsilon(x_\epsilon)-\frac{1}{2}\left(\sup_{\overline{B}_\epsilon(x_\epsilon)}u_\epsilon+\inf_{\overline{B}_\epsilon(x_\epsilon)}u_\epsilon\right)\geq0.
\end{split}
\end{equation}

On the other hand, let $\tilde{x}_\epsilon^{\min}=2x_\epsilon-x_\epsilon^{\min}$ be the symmetric point of $x_\epsilon^{\min}$ with respect to $x_\epsilon$. 
Then, 
\[
\phi(\tilde{x}_\epsilon^{\min})+\phi(x_\epsilon^{\min})-2\phi(x_\epsilon)\leq\max_{\overline{B}\epsilon(x_\epsilon)}\phi+\min_{\overline{B}\epsilon(x_\epsilon)}\phi-2\phi(x_\epsilon),
\]
and a second-order Taylor expansion of $\phi$ around $x_\epsilon$, gives
\begin{equation}\label{(19)[Rossi]_minPoint}
\begin{split}
\max_{\overline{B}\epsilon(x_\epsilon)}\phi&+\min_{\overline{B}\epsilon(x_\epsilon)}\phi-2\phi(x_\epsilon)
\\
&\geq
\left<D^2\phi(x_\epsilon)\, (x_\epsilon^{\min}-x_\epsilon),\, \left(x_\epsilon^{\min}-x_\epsilon\right)\right>+o(\epsilon^2)\quad\textnormal{as}\ \ \epsilon\to0.
\end{split}
\end{equation}
Then, the combination of \eqref{second.part.holds.supersol}, (\ref{(19)[Rossi]_minPoint}), and (\ref{minimum_point})  gives us
\begin{equation*}
-\epsilon^2\left<D^2\phi(x_\epsilon)\, \frac{\nabla\phi(x_\epsilon)}{|\nabla\phi(x_\epsilon)|},\frac{\nabla\phi(x_\epsilon)}{|\nabla\phi(x_\epsilon)|}\right>\geq o(\epsilon^2)\quad\textnormal{as}\ \ \epsilon\to0.
\end{equation*}
Dividing by $\epsilon^2$ and taking $\epsilon\to0$ we get  $-\Delta_\infty^N\phi(\hat{x})\geq0$, and the proof of \eqref{goal.limitPDE.supersol} is complete.

\medskip

\noindent5.\quad{}The proof that $u$ is a viscosity subsolution to \eqref{(30)[Rossi]}, is similar to the supersolution case. Let $\hat{x}\in\Omega$ and  $\varphi\in\mathcal{C}^2(\Omega)$ such that $\varphi$ touches $u$ at $\hat{x}$ strictly from above in a neighborhood of $\hat{x}$. Our goal is to prove
\begin{equation}\label{goal.limitPDE.subsol} 
\min\left\{|\nabla \varphi(\hat{x})|-1,-\Delta_\infty^N \varphi(\hat{x})\right\}\leq0. 
\end{equation}

Just as in the subsolution case, 
Lemma \ref{Lemma4.5[Lindqvist]} holds with $\eta_\epsilon=\epsilon^3$ 
and
we have that  there exists a sequence of points $x_\epsilon\to \hat{x}$ such that (\ref{(14)[Rossi]_subsltn}) holds. Rearranging terms we  deduce 
\begin{equation}\label{eqn.with.sup}
\varphi(x_\epsilon)-\max_{\overline{B}_\epsilon(x_\epsilon)}\varphi-\epsilon^3\leq u_\epsilon(x_\epsilon)-\sup_{\overline{B}_\epsilon(x_\epsilon)}u_\epsilon
\end{equation}
and
\begin{equation}\label{eqn.with.inf}
\varphi(x_\epsilon)
-\min_{\overline{B}_\epsilon(x_\epsilon)}\varphi-\epsilon^3\leq u_\epsilon(x_\epsilon)-\inf_{\overline{B}_\epsilon(x_\epsilon)}u_\epsilon.
\end{equation}

\medskip

\noindent6.\quad{}
Notice  that we can assume $\left|\nabla\varphi(\hat{x})\right|>1$ since otherwise \eqref{goal.limitPDE.subsol} holds
and the proof is complete.
In particular, $\nabla\varphi(x_\epsilon)\neq0$ for $\epsilon$ small enough and
we can prove \eqref{minimum_point} and  \eqref{maximum_point} just as before.

Moreover, 
\eqref{eqn.with.inf}, \eqref{minimum_point} and a Taylor expansion give 
\[
 \frac{1}{\epsilon}\bigg(
 u_\epsilon(x_\epsilon)-\inf_{\overline{B}_\epsilon(x_\epsilon)}u_\epsilon
\bigg)
\geq
 \frac{1}{\epsilon}\bigg(
\varphi(x_\epsilon)-\min_{\overline{B}_\epsilon(x_\epsilon)}\varphi
-
\epsilon^3
\bigg)
=
 |\nabla \varphi(x_\epsilon)|+o(1)
>
1
\]
for $\epsilon$ small enough.
Therefore, taking into account \eqref{DiscreteDPP}, we deduce that
\[
u_\epsilon(x_\epsilon)-\frac{1}{2}\left(\sup_{\overline{B}_\epsilon(x_\epsilon)}u_\epsilon+\inf_{\overline{B}_\epsilon(x_\epsilon)}u_\epsilon\right)=0
\]
and, adding  \eqref{eqn.with.sup} and \eqref{eqn.with.inf},  we obtain
\begin{equation}\label{eqn.this.last.ineualiyt}
\begin{split}
\varphi(x_\epsilon)&-\frac{1}{2}\left(\max_{\overline{B}_\epsilon(x_\epsilon)}\varphi+\min_{\overline{B}_\epsilon(x_\epsilon)}\varphi\right)-\epsilon^3
\\
&\leq u_\epsilon(x_\epsilon)-\frac{1}{2}\left(\sup_{\overline{B}_\epsilon(x_\epsilon)}u_\epsilon+\inf_{\overline{B}_\epsilon(x_\epsilon)}u_\epsilon\right)=0.
\end{split}
\end{equation}

Now, consider the point $\tilde{x}_\epsilon^{\max}=2x_\epsilon-x_\epsilon^{\max}$, the symmetric point of $x_\epsilon^{\max}$ with respect to $x_\epsilon$.
The second-order Taylor expansion of $\varphi$ around $x_\epsilon$ evaluated at $x_\epsilon^{\max}$ and $\tilde{x}_\epsilon^{\max}$ yields,
\begin{equation}\label{(19)[Rossi]_maxPoint}
\begin{split}
\max_{\overline{B}\epsilon(x_\epsilon)}\varphi&+\min_{\overline{B}\epsilon(x_\epsilon)}\varphi-2\varphi(x_\epsilon)
\leq
\varphi\left(x_\epsilon^{\max}\right)+\varphi\left(\tilde{x}_\epsilon^{\max}\right)-2\varphi(x_\epsilon)
\\
&=
\left<D^2\varphi(x_\epsilon)\, (x_\epsilon^{\max}-x_\epsilon),\, \left(x_\epsilon^{\max}-x_\epsilon\right)\right>+o\left(\epsilon^2\right)\quad\textnormal{as}\ \ \epsilon\to0.
\end{split}
\end{equation}

The combination of \eqref{maximum_point}, \eqref{eqn.this.last.ineualiyt}, and (\ref{(19)[Rossi]_maxPoint})  gives us 
\begin{equation*}
-\epsilon^2\left<D^2\varphi(x_\epsilon)\, \frac{\nabla\varphi(x_\epsilon)}{|\nabla\varphi(x_\epsilon)|},\frac{\nabla\varphi(x_\epsilon)}{|\nabla\varphi(x_\epsilon)|}\right>\leq o\left(\epsilon^2\right)\quad\textnormal{as}\ \ \epsilon\to0.
\end{equation*}
After dividing by $\epsilon^2$ and taking $\epsilon\to0$ we get $-\Delta_\infty^N\varphi(\hat{x})\leq0$ and \eqref{goal.limitPDE.subsol} holds as desired.
\end{proof}

\bibliographystyle{amsplain}
\bibliography{references}

\end{document}